\pgfplotsset{compat=1.14}
\title{Maximum $\mathcal H$-free subgraphs}
\author{
	Dhruv Mubayi\footnote{Department of Mathematics, Statistics, and Computer Science, University of Illinois, Chicago, IL, 60607 USA. \texttt{Email:mubayi@uic.edu}. Research partially supported by NSF grants DMS-1300138 and 1763317.} $\qquad$ 
	Sayan Mukherjee\footnote{Department of Mathematics, Statistics, and Computer Science, University of Illinois, Chicago, IL, 60607 USA. \texttt{Email:smukhe2@uic.edu}.}
}
\newcommand{\Hh}{\mathcal H}
\newcommand{\B}{\mathcal B}
\newcommand{\ex}{\mbox{ex}}
\newcommand{\Aut}{\mbox{Aut}}
\newcommand{\twr}{\mbox{twr}}
\newcommand{\EIP}{\mathbf{EIP}}
\newtheorem{thm}{Theorem}[section]
\newtheorem{lemma}[thm]{Lemma}
\newtheorem{prop}[thm]{Proposition}
\theoremstyle{definition}
\newtheorem{claim}[thm]{Claim}
\newtheorem{prob}[thm]{Problem}
\newtheorem{defn}[thm]{Definition}
\newtheorem{cor}[thm]{Corollary}
\newtheorem*{notn}{Notation}
\begin{document}
\maketitle
\begin{abstract}
Given a family of hypergraphs $\mathcal H$, let $f(m,\mathcal H)$ denote the largest size of an $\mathcal H$-free subgraph that one is guaranteed to find in every hypergraph with $m$ edges. This function was first introduced by Erd\H{o}s and Koml\'{o}s in 1969 in the context of union-free families, and various other special cases have been extensively studied since then. In an attempt to develop a general theory for these questions,  we consider the following basic issue: which sequences of hypergraph families $\{\mathcal H_m\}$ have bounded $f(m,\mathcal H_m)$ as $m\to\infty$? A variety of bounds for $f(m,\mathcal H_m)$ are obtained which answer this question in some cases. Obtaining a complete description of sequences $\{\mathcal H_m\}$ for which $f(m,\mathcal H_m)$ is bounded seems hopeless.
\end{abstract}

\footnotetext{{\bf MSC Subject Classification: }05D05, 05C35, 05C65}


\section{Introduction}
A hypergraph $H$ on vertex set $V(H)$ is a subset of $2^{V(H)}$. $H$ is an $\ell$-uniform hypergraph, or simply, an $\ell$-graph, if $H\subseteq\binom{V(H)}{\ell}$. All hypergraphs in this paper have finitely many vertices (and edges). Given a family of hypergraphs $\Hh$, a hypergraph $F$ is said to be $\Hh$-free if $F$ contains no copy of any member of $\Hh$ as a (not necessarily induced) subgraph. Given a hypergraph $F$ and a family $\Hh$, let $\ex(F,\Hh)$ be the maximum size of an $\Hh$-free subgraph of $F$. Define
\[
f(m,\Hh) := \min_{|F|=m}\ex(F,\Hh).
\]

Note that $f(m,\Hh)\ge c$ means that every $F$ with $m$ edges contains an $\Hh$-free subgraph $F'\subseteq F$ with $|F'|=c$. When the family $\Hh$ consists of a single hypergraph $H$, we abuse notation and write $f(m,H)$ instead of $f(m,\{H\})$.

This function was introduced by Erd\H{o}s and Koml\'{o}s in 1969 \cite{erdos-komlos1969}, who considered the case when $\Hh$ is the (infinite) family of hypergraphs $A,B,C$ with $A\cup B = C$. The problem was further studied by Kleitman \cite{kleitman-1973}, and later by Erd\H{o}s and Shelah \cite{erdos-shelah1972}, and finally settled by Fox, Lee and Sudakov \cite{fox-lee-sudakov2012} who proved that 
\[f(m,\Hh)=\left\lfloor\sqrt{4m+1}\right\rfloor - 1.\]

Erd\H{o}s and Shelah also considered the case when $\Hh$ is the family of hypergraphs $A_1,A_2,A_3,A_4$ with $A_1\cup A_2=A_3$ and $A_1\cap A_3=A_4$. They called this family $B_2$, proved that $f(m,B_2)\le (3/2)m^{2/3}$ and conjectured that this bound is asymptotically tight. This conjecture was settled by Bar\'{a}t, F\"{u}redi, Kantor, Kim and Patk\'{o}s in 2012 \cite{furedi-zoltan2012}, who also considered more general problems (see \cite{fox-lee-sudakov2012} for further work).

The same problem has been studied in the special case when $\Hh$ is a family of graphs. Let $f_2(m,\Hh)$ denote the maximum size of an $\Hh$-free subgraph that every graph with $m$ edges is guaranteed to contain. These investigations began with a question of Erd\H{o}s and Bollob\'{a}s \cite{erdos-bollobas1968} in 1966 about $f_2(m,C_4)$, followed up by a conjecture of Erd\H{o}s in \cite{erdos-unsolvedprob1971}. Consequently the problem of determining $f_2(m,H)$ for various graphs has received considerable attention in the recent years \cite{foucaud-krivelevich2015,conlon-fox-sudakov2014,conlon-fox-sudakov2016}. The authors of \cite{conlon-fox-sudakov2014,conlon-fox-sudakov2016} also considered the problem in the case of $\ell$-graphs.

In the hope of obtaining a general theory for these problems, we investigate the following basic question:

\begin{equation}
\label{eqn:mainqn}
\mbox{For which sequence of families }\{\Hh_m\}_{m=1}^\infty\mbox{ is }f(m,\Hh_m)\mbox{ bounded (as }m\to\infty\mbox{)?}
\end{equation}

\smallskip
Question (\ref{eqn:mainqn}) is too general to solve completely, so we focus on special cases. In subsection 2.1 we state our results for constant $\{\Hh_m\}_{m=1}^\infty$, and in subsection 2.2 we consider non-constant $\{\Hh_m\}_{m=1}^\infty$.


\section{Our Results}


\subsection{Constant Sequences}
Suppose $\{\Hh_m\}_{m=1}^\infty$ is a sequence such that $\Hh_m=\Hh$ for every $m$. First, we note that if $\Hh$ consists of finitely many members, then the answer to Question (\ref{eqn:mainqn}) is given by the following characterization. A $q$-sunflower is a hypergraph $\{A_1,\ldots,A_q\}$ such that $A_i\cap A_j = \bigcap_{s=1}^q A_s$ for all $i\neq j$. This common intersection is referred to as the \emph{core} of the sunflower.

\begin{thm}
\label{thm:sunflower}
	Fix a family of hypergraphs $\Hh$ with finitely many members. If $\Hh$ contains a $q$-sunflower with sets of equal size, then $f(m,\Hh)\le q-1$. Otherwise, $f(m,\Hh)\to\infty$ as $m\to\infty$.
\end{thm}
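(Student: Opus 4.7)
The plan is to prove the two implications separately; the non-trivial one rests on the Erd\H{o}s--Rado sunflower lemma.

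For the upper bound, suppose $\Hh$ contains a uniform $q$-sunflower $H_0=\{A_1,\ldots,A_q\}$ with $|A_i|=\ell$ and $|\bigcap_i A_i|=c$. For each $m$, I would take $F$ to be an $m$-sunflower of the same shape: fix a core $C'$ of size $c$ and attach $m$ pairwise disjoint petals of size $\ell-c$. Any $q$ edges of $F$ form a uniform $q$-sunflower with the same core and petal sizes as $H_0$, hence are isomorphic to $H_0$. So every $F'\subseteq F$ with $|F'|\ge q$ contains a copy of $H_0$, giving $\ex(F,\Hh)\le q-1$ and therefore $f(m,\Hh)\le q-1$.

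For the lower bound, suppose no member of $\Hh=\{H_1,\ldots,H_k\}$ is a uniform sunflower; note each $H_i$ then has at least two edges, since a single edge would be a uniform $1$-sunflower. Let $S$ be the finite set of edge sizes appearing in $\Hh$. Given $F$ with $m$ edges, partition $F$ by edge size into $\{F_\ell\}_{\ell\in S}$ together with $F_{\mathrm{out}}$, the edges of size outside $S$. Since any copy of any $H_i$ is an injective vertex map preserving edge sizes, $F_{\mathrm{out}}$ is automatically $\Hh$-free. By pigeonhole either $|F_{\mathrm{out}}|\ge m/(|S|+1)$, in which case we are done, or some $|F_\ell|\ge m/(|S|+1)$. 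In the latter case I would apply the Erd\H{o}s--Rado sunflower lemma to the $\ell$-uniform family $F_\ell$ to extract a uniform $q$-sunflower $F'\subseteq F_\ell$ with $q\to\infty$ as $m\to\infty$.

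The crux---and the step I expect to require the most care---is verifying that this uniform sunflower $F'$ is $\Hh$-free. Suppose for contradiction that $\phi\colon H_i\hookrightarrow F'$ is an embedding; then $H_i$ is $\ell$-uniform (edge sizes are preserved), and for any two distinct edges $e,e'\in E(H_i)$, injectivity of $\phi$ forces $\phi(e)\neq\phi(e')$, so $\phi(e)\cap\phi(e')$ equals the core $C_{F'}$ of the sunflower $F'$. Injectivity of $\phi$ on vertices then gives $e\cap e' = \phi^{-1}(C_{F'})$, a set $K$ independent of the pair $(e,e')$. Hence all pairwise intersections of edges of $H_i$ equal $K = \bigcap_{e\in E(H_i)} e$, exhibiting $H_i$ as a uniform sunflower---contradicting the hypothesis on $\Hh$ and completing the proof.
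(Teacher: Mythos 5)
Your proof is correct, and it uses the same two key ingredients as the paper: the $m$-sunflower construction (with the core and edge size copied from the sunflower in $\Hh$) for the upper bound, and the Erd\H{o}s--Rado lemma plus the observation that every $\ge 2$-edge subfamily of a uniform sunflower is again a uniform sunflower for the lower bound. The organization of the lower bound is genuinely different, though, and cleaner than the paper's. The paper processes the members of $\Hh$ one at a time: it first peels off the non-uniform members by repeated halving on edge sizes, and then for each uniform member $H_i$ of uniformity $r_i$ it either passes to a half with no edges of size $r_i$ or extracts a sunflower from an $r_i$-uniform half, iterating and ending with a bound of order $m^{1/(r_1\cdots r_\ell)}$. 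You instead partition $F$ once by edge size into the classes $F_\ell$, $\ell\in S$, plus $F_{\mathrm{out}}$, observe that $F_{\mathrm{out}}$ is trivially $\Hh$-free, and apply Erd\H{o}s--Rado a single time to a large class $F_\ell$; the resulting uniform sunflower is simultaneously free of \emph{all} members of $\Hh$ (those of uniformity $\neq\ell$ for size reasons, those of uniformity $\ell$ by your embedding argument, which is the step that needs care and which you carry out correctly). This one-shot version yields the stronger quantitative bound $f(m,\Hh)=\Omega\bigl(m^{1/\ell_{\max}}\bigr)$, where $\ell_{\max}$ is the largest edge size occurring in $\Hh$, rather than the paper's $\Omega\bigl(m^{1/(r_1\cdots r_\ell)}\bigr)$. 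The only points worth adding for completeness are the degenerate ones you largely anticipate: a single edge is a uniform $1$-sunflower (so every member of $\Hh$ has at least two edges in the second case), and for $m<q$ the upper bound $f(m,\Hh)\le q-1$ is trivial.
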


Next, in the same spirit as the properties of being union-free and having no $B_2$, if the (infinite) family $\Hh$ specifies the intersection type of $k$ sets (ie whether they are empty or not), then a characterization can be obtained in the form of Theorem \ref{thm:binaryinfo}. Before stating the theorem, we first define what we call an even hypergraph and an $\ell$-uneven hypergraph. A $k$-edge hypergraph is a hypergraph with $k$ edges.

\begin{defn}[Even and $\ell$-uneven hypergraphs]
A $k$-edge hypergraph $H=\{A_1,\ldots,A_k\}$ is said to be \emph{even} if for every $1\le\ell\le k$ and for every $I,J\in \binom{[k]}{\ell}$, $\bigcap_{i\in I}A_i=\varnothing\iff \bigcap_{j\in J}A_j=\varnothing$. It is said to be \emph{$\ell$-uneven} if there exist $I,J\in \binom{[k]}{\ell}$ such that $\bigcap_{i\in I}A_i\neq\varnothing$ but $\bigcap_{j\in J}A_j=\varnothing$.
\end{defn}
\begin{thm}
\label{thm:binaryinfo}
Let $1\le \ell<k$. Let $\Hh$ be the (infinite) family of all $\ell$-uneven $k$-edge hypergraphs. Then, $f(m,\Hh)\to\infty$ as $m\to\infty$. Conversely, if $\Hh$ is the family of all even $k$-edge hypergraphs, we have $f(m,\Hh) = k-1$.
\end{thm}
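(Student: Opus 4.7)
The plan is to prove the two halves of the theorem by distinct methods: a direct sunflower construction for the second, and Ramsey's theorem for $\ell$-uniform hypergraphs for the first.

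For the second half, where $\Hh$ is the family of all even $k$-edge hypergraphs, the bound $f(m,\Hh)\ge k-1$ is immediate since each member of $\Hh$ has exactly $k$ edges, so any $(k-1)$-edge subfamily of an $m$-edge $F$ is trivially $\Hh$-free. For the matching upper bound, I would take $F$ to be an $m$-petal sunflower with core $C$. For any $k$-edge subfamily $\{A_{i_1},\ldots,A_{i_k}\}$ of $F$ and any $t\in\{1,\ldots,k\}$, every $t$-wise intersection $\bigcap_{j\in J}A_{i_j}$ with $J\in\binom{[k]}{t}$ is a nonempty edge of $F$ when $t=1$ and equals $C$ when $t\ge 2$; hence for each fixed $t$, all $\binom{k}{t}$ intersections share the same emptiness type, so the subfamily is even. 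Consequently every $\Hh$-free subgraph of $F$ has fewer than $k$ edges, giving $f(m,\Hh)\le k-1$.

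For the first half, where $\Hh$ is the family of all $\ell$-uneven $k$-edge hypergraphs, I would reduce to Ramsey's theorem. Given $F=\{A_1,\ldots,A_m\}$, define a $2$-coloring $\chi\colon \binom{[m]}{\ell}\to\{0,1\}$ by $\chi(I)=1$ if $\bigcap_{i\in I}A_i\neq\varnothing$ and $\chi(I)=0$ otherwise. Ramsey's theorem for $\ell$-uniform hypergraphs supplies a number $R(\ell,n)$, finite for every $n$, such that whenever $m\ge R(\ell,n)$ one can find $S\subseteq[m]$ with $|S|=n$ on which $\chi$ is constant. Setting $F'=\{A_i:i\in S\}$, every $k$-edge subfamily indexed by some $T\in\binom{S}{k}$ has all $\binom{k}{\ell}$ of its $\ell$-wise intersections sharing the same $\chi$-value, so the subfamily is not $\ell$-uneven. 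Thus $F'$ is $\Hh$-free with $|F'|=n$, which forces $f(m,\Hh)\ge n\to\infty$ as $m\to\infty$.

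The main point requiring care is the evenness check in the second half, where the $t=1$ and $t\ge 2$ levels must be handled separately (the former because intersections are just the edges themselves, the latter because everything collapses to $C$). The Ramsey step in the first half is otherwise routine, but it yields only a very slow (tower-type in $\ell$) lower bound on the growth of $f(m,\Hh)$; this is acceptable since the statement asks only for divergence.
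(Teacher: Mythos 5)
Your proof is correct and follows essentially the same route as the paper: the divergence half is the identical Ramsey argument applied to the $\ell$-uniform ``empty intersection'' coloring of $\binom{F}{\ell}$, and the upper bound in the even half comes from exhibiting an $m$-edge hypergraph in which every $k$-edge subfamily is even, combined with the trivial lower bound $k-1$. The only cosmetic difference is that you realize such a hypergraph with an $m$-petal sunflower (which gives threshold level $L=1$ or $L=k$ according to whether the core is empty), whereas the paper uses a bipartite incidence construction on $[m]\times\binom{[m]}{L}$ realizing an arbitrary threshold $L$; both serve the purpose equally well.
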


\subsection{Non-constant Sequences}
As a first step towards understanding the general problem in (\ref{eqn:mainqn}), we focus on the case when for every $m\ge 1$, $\Hh_m=\{H_m\}$ for a single hypergraph $H_m$, and further assume that all these hypergraphs $H_m$ have the same number of edges. Thus we ask the following question:

\begin{equation}
\label{eqn:mainqn2}
\mbox{For which sequence of }k\mbox{-edge hypergraphs }\{H_m\}_{m=1}^\infty\mbox{ is }f(m,H_m)\mbox{ bounded (as }m\to\infty\mbox{)?}
\end{equation}

\smallskip
We are unable to answer question (\ref{eqn:mainqn2}) completely, even for $k=3$. Our main results provide several necessary, or sufficient conditions that partially answer (\ref{eqn:mainqn2}). Before presenting them, we introduce the following crucial definition:

\begin{defn}[Equal Intersection Property]
	 For $k\ge 2$, Let $\EIP_k$ denote the set of all $k$-edge hypergraphs $H=\{A_1,\ldots,A_k\}$ such that for every $1\le \ell \le k$ and $I,J\in \binom{[k]}{\ell}$, we have $\left|\bigcap_{i\in I}A_i\right| = \left|\bigcap_{j\in J}A_j\right|$.
\end{defn}
Every $H=\{A_1,\ldots,A_k\}\in \EIP_k$ can be encoded by $k$ parameters $(b_1,\ldots,b_k)$, corresponding to the $k$ distinct sizes appearing in the Venn diagram of $H$. More precisely, for $1\le \ell\le k$, and for all $I\in \binom{[k]}\ell$,
\[
b_\ell := \left|\bigcap_{i\in I}A_i\setminus \bigcup_{i\in [k]\setminus I}A_i\right|.
\]

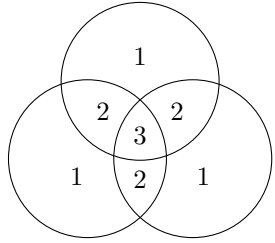
\begin{figure}[h]
\begin{center}
	\begin{tikzpicture}[scale=0.7]
	\draw (0,0) circle [radius=1.5]; 
	\draw (2,0) circle [radius=1.5];
	\draw (1,1.47) circle [radius=1.5];
	\draw (-0.2,0) node [below]{$1$};
	\draw (2.2,0) node [below]{$1$};
	\draw (1,1.6) node [above] {$1$};
	\draw (1,0.45) node {$3$};
	\draw (0.3,0.9) node {$2$};
	\draw (1.7,0.9) node {$2$};
	\draw (1,-0.4) node {$2$};
	\end{tikzpicture}
	\caption{An example: $H(1,2,3)\in\mathbf{EIP}_3$}
\end{center}
\end{figure}

By inclusion-exclusion, $b_1,\ldots,b_k$ are well-defined for hypergraphs in $\EIP_k$. We denote $H\in \EIP_k$ with parameters $b_1,\ldots,b_k\ge 0$ by $H(\vec{b})$, where $\vec b = (b_1,\ldots,b_k)$. We shall see later (Lemma \ref{lem:EIP}) that every sequence of $k$-edge hypergraphs $\{H_m\}$ such that $f(m,H_m)$ is bounded, can only have finitely many members not in $\EIP_k$. For sequences $\{H_m\}_{m=1}^\infty$ such that $H_m\in\EIP_k$ for every $m\ge 1$, we obtain a sequence of length $k$ vectors $\{\vec{b}(m)\}_{m=1}^\infty$, where $\vec{b}(m)=(b_1(m),\ldots,b_k(m))$. We use boldface and write $\vec{\bf b}$ for the sequence $\{\vec{b}(m)\}_{m=1}^\infty$. Since every two edges form a $2$-sunflower, this discussion together with the construction from Theorem \ref{thm:sunflower} gives us

\begin{prop}
\label{prop:prop_k=2}
Let $H_m$ be a 2-edge hypergraph for each $m\ge 1$. Then $f(m, H_m)$ is bounded as $m\rightarrow \infty$ if and only if $H_m \in \EIP_2$ for all but finitely many $m$.
\end{prop}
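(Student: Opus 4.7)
The plan is to prove the biconditional directly, building on the observation made in the text immediately preceding the statement: any $2$-edge hypergraph $H=\{A,B\}$ is a $2$-sunflower with core $A\cap B$, and the condition $H\in\EIP_2$ is exactly $|A|=|B|$.

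For the ``if'' direction, I will assume that $H_m\in\EIP_2$ for every $m\ge M$ and deduce that $f(m,H_m)$ is bounded. For each such $m$, the singleton family $\{H_m\}$ contains a $2$-sunflower with sets of equal size, so Theorem~\ref{thm:sunflower} (with $q=2$) yields $f(m,H_m)\le 1$. For the finitely many $m<M$, the trivial bound $f(m,H_m)\le m < M$ holds, and hence the whole sequence is bounded by $\max\{1,M-1\}$.

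For the ``only if'' direction, I will assume that $H_m\notin\EIP_2$ for infinitely many $m$ and deduce unboundedness. For each such $m$, write $H_m=\{A_m,B_m\}$ with $a_m:=|A_m|\neq|B_m|=:b_m$. The key observation is that any copy of $H_m$ inside a hypergraph $F$ requires one edge of size exactly $a_m$ and another of size exactly $b_m$. Hence, given any $F$ with $|F|=m$, partitioning its edges by size into the three classes ``size $a_m$'', ``size $b_m$'', and ``size $\notin\{a_m,b_m\}$'' produces three $H_m$-free subgraphs, each missing an edge of one of the two required sizes. By pigeonhole one class has at least $m/3$ edges, so $\ex(F,H_m)\ge m/3$; minimising over $F$ gives $f(m,H_m)\ge m/3$, and since this holds along an infinite subsequence of $m$, the sequence $f(m,H_m)$ is unbounded.

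I do not foresee any serious obstacle. The only minor subtlety is that Theorem~\ref{thm:sunflower} is formally stated for a \emph{fixed} family $\Hh$, whereas here the family $\{H_m\}$ varies with $m$; this will be handled by applying the theorem separately to each singleton $\{H_m\}$ for $m\ge M$, which yields the uniform bound $f(m,H_m)\le 1$ on that tail. The pigeonhole argument in the reverse direction is self-contained and needs no additional input.
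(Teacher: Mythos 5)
Your proof is correct and takes essentially the same route as the paper: the ``if'' direction is exactly the equal-size $2$-sunflower construction from Theorem~\ref{thm:sunflower}, and the ``only if'' direction is the size-partition argument that the paper obtains by invoking Lemma~\ref{lem:EIP} with $\ell=1$ (which there degenerates to the same pigeonhole on edge sizes). Your direct version even yields the stronger lower bound $f(m,H_m)\ge m/3$ in place of the lemma's logarithmic bound, but either suffices for unboundedness.
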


We may therefore assume in what follows that $ k \ge 3$.

\begin{defn}[$\alpha(\vec{\bf b})$]
	For every sequence of length $k$ vectors $\vec{\bf b}=\{\vec{b}(m)\}_{m=1}^\infty$ and $m\ge 1$, let
	\[
	\alpha(\vec{\bf b}) (m) := \min_{1\le i \le k-2}\left(\frac{b_i(m)}{mb_{i+1}(m)}\right).
	\]
\end{defn}

Now we state our main results. To simplify notation we will often write $b_i$ instead of $b_i(m)$ and $\alpha(\vec{\bf b})$ instead of $\alpha(\vec{\bf b})(m)$.
\medskip

\begin{thm}
\label{thm:generalupperlowerbd}
	Let $k\ge 3$. Suppose the sequence of length $k$ vectors $\vec{\bf b}$ satisfies $b_1,\ldots,b_{k-2}>0$, $b_{k-1},b_k\ge 0$ for every $m$. Then, for $m\ge 6$,
	\[
	\left(\frac{1}{2\left(\alpha(\vec{\bf b})+\frac 1m\right)\binom{b_{k-1}+b_k}{b_k}}\right)^{\frac 1k}\le f(m,H(\vec{\bf b}))\le \frac{k(k-1)}{\alpha(\vec{\bf b})}+k-1.
	\]
\end{thm}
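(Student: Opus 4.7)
The plan is to prove the two inequalities separately, the upper bound by an explicit construction and the lower bound by a probabilistic deletion argument.

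\medskip

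\noindent\textbf{Upper bound.} Fix an index $i^* \in \{1,\dots,k-2\}$ attaining the minimum in $\alpha = b_{i^*}/(m\,b_{i^*+1})$. I would construct $F=H(\vec{b'})$ for a carefully chosen $\vec{b'}\in\EIP_k$ with $m$ edges, such that the edges of $F$ partition into $t \approx k/\alpha$ groups, within each of which any $k$ edges form a copy of $H(\vec b)$. The required group size $g \approx m/t$ is feasible because the linear system expressing ``every $k$-subset of a $g$-edge $H(\vec{b'})$ restricts to $H(\vec b)$'' admits a nonnegative solution as long as $g-k \lesssim b_{i^*}/b_{i^*+1} = \alpha m$; the freedom to choose $b'_j>0$ for $j>k$ absorbs the separate constraint tied to $b_{k-1}/b_k$, which is precisely why this ratio is excluded from $\alpha$. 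A pigeonhole argument then shows that any subgraph with more than $(k-1)t \le k(k-1)/\alpha + k-1$ edges must contain $k$ edges inside a single group, hence a copy of $H(\vec b)$.

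\medskip

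\noindent\textbf{Lower bound.} Let $F$ be an arbitrary $m$-edge hypergraph and let $N$ denote the number of $H(\vec b)$-copies in $F$. Sample a uniformly random $n$-subset $F'\subseteq E(F)$. A fixed copy of $H(\vec b)$ survives in $F'$ with probability $\binom{m-k}{n-k}/\binom{m}{n}\le (n/m)^k$, so by linearity $\E[\#\text{copies in }F'] \le (n/m)^k N$. With $n=\lfloor(m^k/(2N))^{1/k}\rfloor$ this expectation is below $1/2$, so some $F'$ of size $n$ is $H(\vec b)$-free, giving $f(m,H(\vec b))\ge(m^k/(2N))^{1/k}$. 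It therefore suffices to establish the counting bound
\[
N \le m^k\bigl(\alpha+\tfrac1m\bigr)\binom{b_{k-1}+b_k}{b_k}.
\]
My plan for this is a two-stage count: (a) first fix an ordered $(k-1)$-tuple of edges of $F$ whose Venn diagram matches that of $(A_1,\dots,A_{k-1})$ in $H(\vec b)$, and (b) count the ways to extend by an edge $e_k$. Stage (b) contributes exactly $\binom{b_{k-1}+b_k}{b_k}$, since the $(k-1)$-wise intersection $Y:=\bigcap_{i<k}e_i$ has size $b_{k-1}+b_k$ by $H(\vec b)\in\EIP_k$, and $e_k$ must meet $Y$ in a prescribed $b_k$-subset. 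Stage (a) is handled by a telescoping/double-counting argument that, using the rigid intersection hierarchy forced by $\EIP_k$, bounds the number of partial tuples by $m^{k-1}(b_{i^*}/b_{i^*+1}+1)$; the $+1$ here is the origin of the $+1/m$ in $\alpha+1/m$.

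\medskip

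\noindent\textbf{Main obstacle.} The hardest step is the telescoping count in stage (a): one must cleanly replace per-level ratios $b_\ell/b_{\ell+1}$ by powers of the global minimum $\alpha^{-1}$, and the combinatorial bookkeeping for this---in particular reconciling ordered versus unordered counts and respecting the boundary between levels $i\le k-2$ and $i=k-1$, which is exactly where the range of $\alpha$'s minimum cuts off---is nontrivial. This boundary phenomenon is what explains both the exclusion of the index $i=k-1$ from the definition of $\alpha$ and the presence of the small correction term $1/m$ in the statement. A parallel subtlety appears in the upper bound construction, where absorbing the $b_{k-1}/b_k$ constraint requires activating the ``higher'' parameters $b'_j$ for $j>k$, and getting the right constant (the leading factor of $k$ in $k(k-1)/\alpha$) comes from balancing group size against the number of groups.
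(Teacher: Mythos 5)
Your upper bound sketch follows the same route as the paper (groups of size roughly $\alpha m/k$ in which every $k$ edges form $H(\vec b)$, then pigeonhole over $\lceil k/\alpha\rceil$ disjoint groups), but the entire technical content is asserted rather than proved: you need an explicit construction realizing a $g$-edge hypergraph all of whose $k$-edge subgraphs are $H(\vec b)$, and the verification that the nonnegativity of the associated linear system is governed by $\alpha$. In the paper this is Lemma \ref{lem:linearmatrixupperbd} together with the construction $F_m^{d_1,\ldots,d_k}$ and the matrix identity $B_{k,k}DW'=I_k$; the feasibility condition is a full alternating sum, not just the two-term truncation $b_i\ge (g-k)b_{i+1}$ that your heuristic addresses, and handling it is where the factor $k$ in $g\approx \alpha m/k$ comes from. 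As written, the upper bound is an outline of the correct strategy, not a proof.

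The lower bound has a genuine error. Your stage (b) claims that a fixed ordered $(k-1)$-tuple $(e_1,\ldots,e_{k-1})$ extends to at most $\binom{b_{k-1}+b_k}{b_k}$ copies of $H(\vec b)$. This only counts the possible traces of $e_k$ on $Y=\bigcap_{i<k}e_i$; it does not bound the number of edges realizing a given trace, and that number can be as large as $m-k+1$. Indeed, in the paper's own construction $F_m^{d_1,\ldots,d_k}$ every $k$-subset of the $m$ edges is a copy, so each $(k-1)$-tuple has $m-k+1$ extensions, while $\binom{b_{k-1}+b_k}{b_k}=1$ whenever $b_k=0$. Meanwhile your stage (a) bound $m^{k-1}(b_{i^*}/b_{i^*+1}+1)=m^k(\alpha+1/m)\ge m^{k-1}$ is weaker than the trivial count of $(k-1)$-tuples, so it supplies no compensating savings; the whole burden of the counting bound $N\le m^k(\alpha+1/m)\binom{b_{k-1}+b_k}{b_k}$ rests on the false stage (b), and that global bound is itself unsubstantiated (a naive repair via a maximum-degree bound only gives an extra factor of $\max\deg$, which can be $m$). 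The paper circumvents exactly this difficulty by working locally rather than globally: it takes a maximum $H(\vec b)$-free set $T$ of size $t$, notes that every remaining edge completes a copy with some $(k-1)$-subset of $T$, pigeonholes onto a single $(k-1)$-tuple and a single $b_k$-trace to get a large family $F_3$, and then double-counts incidences with $A_1\cap\cdots\cap A_{i}\setminus(A_{i+1}\cup\cdots\cup A_{k-1})$ to force a vertex of degree $D\gtrsim |F_3|\,b_{i+1}/(b_i+b_{i+1})$; the key closing step, absent from your argument, is that the star at that vertex inside $F_3$ is itself $H(\vec b)$-free, so $D\le t$, and this self-referential inequality yields $t^k\gtrsim mb_{i+1}/\bigl((b_i+b_{i+1})\binom{b_{k-1}+b_k}{b_k}\bigr)$. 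Without either a correct proof of your global counting bound or this bootstrapping mechanism, the lower bound does not go through.
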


Theorem \ref{thm:generalupperlowerbd} implies that when $\binom{b_{k-1}+b_k}{b_k}$ is bounded from above, $f(m,H(\vec{\bf b}))$ is bounded from above if and only if the sequence $\alpha(\vec{\bf b})$ is bounded away from zero.

We also have the following additional lower bound on $f(m,H(\vec{\bf b}))$:

\begin{thm}
\label{thm:generallowerbd}
	Fix $k\ge 3$. Let $\vec{\bf b}=\{\vec b (m)\}_{m=1}^\infty$ be such that $b_k(m)=b_k$ for every $m$. Then, for $m\ge 6$,
	\[
	f(m,H(\vec{\bf b}))\ge
	\left\{ 
		\begin{array}{lc}
		m^{\frac 1{k(b_k+1)}}\left(\frac{b_{k-1}}{4(b_{k-2}+2b_{k-1})}\right)^{\frac 1k}, & k\ge 4,\\
		m^{\frac1{b_3+2}}\left(\frac{b_2}{4(b_1+2b_2)}\right)^{\frac{b_3+1}{b_3+2}}, & k=3.
		\end{array}
	\right.
		\]
\end{thm}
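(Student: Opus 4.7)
The plan is to proceed by induction on the fixed parameter $b_k$, reducing the general case to the base case $b_k = 0$ via a link trick. For the inductive step from $b_k - 1$ to $b_k \ge 1$, I would fix a threshold $D_0$ and case-split on the maximum vertex degree of $F$. If some vertex $v$ has $d(v) \ge D_0$, I pass to the link hypergraph $F_v - v := \{A \setminus \{v\} : v \in A \in F\}$, which has at least $D_0$ edges, and apply the inductive hypothesis to $H(b_1, \ldots, b_{k-1}, b_k - 1)$: any $H(b_1, \ldots, b_{k-1}, b_k - 1)$-free subfamily $G'$ of $F_v - v$ lifts to an $H(\vec b)$-free subfamily $G' + v := \{B \cup \{v\} : B \in G'\}$ of $F$ of the same size, because every edge of $G' + v$ contains $v$, so a hypothetical copy of $H(\vec b)$ in $G' + v$ would force $v$ into its $k$-wise core, and its removal would yield a copy of $H(b_1, \ldots, b_{k-1}, b_k - 1)$ in $G'$, contradicting the choice of $G'$.

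In the complementary low-degree case (every vertex has $d(v) < D_0$), I exploit that any core $C$ of size $b_k \ge 1$ satisfies $d(C) \le \min_{v \in C} d(v) < D_0$, so the number $N$ of copies of $H(\vec b)$ in $F$ is bounded by
\[
N \le \sum_{|C|=b_k}\binom{d(C)}{k} \le \frac{D_0^{k-1}}{k!}\cdot m\binom{s}{b_k},
\]
where $s$ is the common edge size. A standard random-deletion argument---include each edge independently with probability $p$ and delete one edge from each surviving copy---then yields an $H(\vec b)$-free subfamily of expected size at least $(1-1/k)\,m\,(m/(kN))^{1/(k-1)}$ after optimizing $p$. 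Balancing this against the case-one bound by choosing $D_0 \sim m^{b_k/(b_k+1)}$ produces the recursive drop in exponent and yields the target $m^{1/(k(b_k+1))}$ factor for $k \ge 4$.

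The base case $b_k = 0$ is handled separately. Either some vertex $v$ has $d(v) \ge D_0$, and the family $\{A \in F : v \in A\}$ is automatically $H(\vec b)$-free (any $k$ such edges share $v$, contradicting $b_k = 0$); or all vertex degrees are below $D_0$, in which case a similar random-deletion argument---refined using the fact that every pair of edges in a copy of $H(\vec b)$ shares a vertex when $b_{k-1} \ge 1$---gives the base-case exponents $1/k$ for $k \ge 4$ and $1/2$ for $k = 3$. The sharper $k = 3$ base arises from a co-degree split on $b_2$-sets rather than vertices: the family of edges containing any fixed $b_2$-set is already $H(\vec b)$-free, because the triple intersection of any three such edges has size at least $b_2 > 0 = b_3$, which interacts cleanly with the symmetric three-edge structure. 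The hard part will be preserving the constant factor $(b_{k-1}/(4(b_{k-2}+2b_{k-1})))^{1/k}$ (and its analogue for $k = 3$) across the induction; this ratio arises in the low-degree base case from a careful count of the edges compatible with a prescribed partial copy using the Venn-diagram structure of $H(\vec b)$, and the thresholds $D_0$ at each inductive level must be tuned so that this constant does not degrade.
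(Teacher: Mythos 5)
Your overall skeleton---induction on $b_k$, passing to the link $F_v-v$ at a high-degree vertex $v$ so that an $H(b_1,\ldots,b_{k-1},b_k-1)$-free family lifts back to an $H(\vec b)$-free one---is exactly the engine of the paper's proof, and your justification of the lift ($v$ is forced into the $k$-wise core) is correct. The genuine gap is in your complementary low-degree case. Your count of copies,
\[
N\le \sum_{|C|=b_k}\binom{d(C)}{k}\le \frac{D_0^{k-1}}{k!}\,m\binom{s}{b_k},
\]
carries the factor $\binom{s}{b_k}$, where $s$ is the common edge size. Only $b_k$ is fixed in the theorem; $b_1,\ldots,b_{k-1}$ may grow arbitrarily with $m$, so $s\ge b_{k-1}$ and hence $\binom{s}{b_k}$ is unbounded. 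After the random-deletion step this factor enters the independent-set bound as $\binom{s}{b_k}^{-1/(k-1)}$, and no choice of $D_0$ balances it away: the resulting bound degrades with $b_1,\ldots,b_{k-1}$ in a way that the stated constant $\left(\tfrac{b_{k-1}}{4(b_{k-2}+2b_{k-1})}\right)^{1/k}$ does not (already for $k=3$, $b_3=1$ and $b_1\gg b_2^{\,2}$ your balance falls short of the target exponent on the $b_i$'s). The same objection applies to your sketched base case, where the key constant is asserted rather than derived.

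The paper avoids counting copies through their cores. It applies the Spencer--Tur\'an independence bound to the $k$-graph whose edges are the copies of $H(\vec b)$, concluding that if the largest $H(\vec b)$-free subfamily has size $t$ then some $(k-2)$-tuple $A_1,\ldots,A_{k-2}$ lies in at least $m^2/(16\,t^{k-1})$ copies, hence is extended by a set $\mathcal B$ of at least $m/(4t^{(k-1)/2})$ edges. It then uses the exact Venn data $|A_1\cap\cdots\cap A_{k-2}|=b_{k-2}+2b_{k-1}+b_k$ and $|A_1\cap\cdots\cap A_{k-2}\cap B|=b_{k-1}+b_k$ for $B\in\mathcal B$ to extract, by averaging incidences over that $(k-2)$-wise intersection, a single vertex of degree
\[
D\ \ge\ \frac14\cdot\frac{b_{k-1}}{b_{k-2}+2b_{k-1}}\cdot\frac{m}{t^{(k-1)/2}},
\]
which is precisely where the constant in the theorem originates; only then does it apply the link induction, solving the recursion $t\ge f(D,H(b_1,\ldots,b_{k-1},b_k-1))$. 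So the high-degree vertex is derived from the smallness of $t$ rather than posited in an a priori case split. To repair your argument you would need to replace the core-counting step by such an incidence count inside a fixed $(k-2)$-wise intersection, which eliminates the spurious $\binom{s}{b_k}$ dependence.
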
\medskip

We now focus on  $k=3$.
In this case  $\alpha(\vec{\bf b})=b_1/mb_2$ and Theorem \ref{thm:generalupperlowerbd} reduces to 
\begin{equation} \label{reduces}
\left(\frac{1}{2\left(\frac{b_1}{mb_2}+\frac 1m\right)\binom{b_{2}+b_3}{b_3}}\right)^{\frac 13}\le f(m,H(\vec{\bf b}))\le \frac{6mb_2}{b_1}+2.
\end{equation}
When $b_3=0$, (\ref{reduces}) implies that $f(m, H_3(b_1, b_2, 0))$ is bounded if and only if
 $b_1 = \Omega(mb_2)$. We now turn to $b_3=1$ which already seems to be a very interesting special case that is related to an open question in extremal graph theory (see Problem~\ref{prob:prob2} in Section~\ref{problems}). 
Here (\ref{reduces}) and Theorem~\ref{thm:generallowerbd} yield the following.
\begin{cor} \label{cor:k=3}
Let $m \rightarrow \infty$. 	Then $f(m,H_3(b_1,b_2,1))$ is bounded when $b_1=\Omega(m b_2)$
and it is unbounded when either $b_1+b_2 = o(m)$ or $b_1=o(\sqrt m\,b_2)$.
\end{cor}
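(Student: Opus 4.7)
The plan is to deduce each of the three assertions directly from (\ref{reduces}) and Theorem~\ref{thm:generallowerbd} by substituting $b_3=1$. Throughout I tacitly assume $b_2\ge 1$; the degenerate case $b_2=0$ makes $H_3(b_1,0,1)$ a $3$-sunflower with equal-size petals, to which Theorem~\ref{thm:sunflower} applies directly and yields $f\le 2$, so no asymptotic analysis is needed there.

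For the bounded regime $b_1=\Omega(mb_2)$, I would fix $c>0$ with $b_1\ge c\,mb_2$ for all large $m$, and then the right-hand side of (\ref{reduces}) immediately gives
\[
f(m,H_3(b_1,b_2,1))\le \frac{6mb_2}{b_1}+2\le \frac{6}{c}+2.
\]
For the first unbounded regime $b_1+b_2=o(m)$, I would plug $b_3=1$ into the lower bound in (\ref{reduces}). Since $\binom{b_2+1}{1}=b_2+1$ and $b_1/(mb_2)+1/m=(b_1+b_2)/(mb_2)$, this simplifies to
\[
f(m,H_3(b_1,b_2,1))\ge\left(\frac{mb_2}{2(b_2+1)(b_1+b_2)}\right)^{1/3}\ge\left(\frac{m}{4(b_1+b_2)}\right)^{1/3},
\]
where the last step uses $b_2/(b_2+1)\ge 1/2$ for $b_2\ge 1$. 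The hypothesis $b_1+b_2=o(m)$ then forces the right-hand side to infinity.

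For the second unbounded regime $b_1=o(\sqrt m\,b_2)$, I would apply Theorem~\ref{thm:generallowerbd} with $k=3$ and $b_3=1$, obtaining
\[
f(m,H_3(b_1,b_2,1))\ge m^{1/3}\left(\frac{b_2}{4(b_1+2b_2)}\right)^{2/3},
\]
and then split on the sign of $b_1-b_2$. If $b_1\le b_2$, then $b_1+2b_2\le 3b_2$, so the factor inside the parenthesis is at least $1/12$ and $f\ge 12^{-2/3}m^{1/3}\to\infty$. If instead $b_1>b_2$, then $b_1+2b_2\le 3b_1$, so the factor is at least $b_2/(12b_1)$, yielding $f\ge 12^{-2/3}(mb_2^2/b_1^2)^{1/3}$; this tends to infinity precisely because $b_1=o(\sqrt m\,b_2)$ is equivalent to $b_1^2=o(mb_2^2)$.

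The argument is in the end just a reshuffling of the two inequalities already proved earlier; the only real point of care is the case split in the second unbounded regime (so that the $b_1$ term in $b_1+2b_2$ does not swamp the $b_2$ factor in the numerator), together with a side remark handling the degenerate situation $b_2=0$. I do not anticipate any deeper obstacle.
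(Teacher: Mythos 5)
Your proposal is correct and follows exactly the route the paper intends: the corollary is stated as an immediate consequence of (\ref{reduces}) (for the bounded regime and the $b_1+b_2=o(m)$ regime) and of Theorem~\ref{thm:generallowerbd} with $k=3$, $b_3=1$ (for the $b_1=o(\sqrt m\,b_2)$ regime), and your substitutions, the bound $b_2/(b_2+1)\ge 1/2$, and the case split $b_1\le b_2$ versus $b_1>b_2$ all check out. Your side remark on $b_2=0$ is a harmless extra precaution that the paper itself does not bother with.
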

Corollary~\ref{cor:k=3} can be summarized in Figure \ref{fig:initialpic}. The light region corresponds to a bounded $f(m,H(\vec{\bf b}))$, and the dark region corresponds to unbounded $f(m,H(\vec{\bf b}))$. White regions correspond to areas where we do not know if $f(m,H(\vec{\bf b}))$ is bounded or not.

\begin{figure}[H]
\begin{center}
	\begin{tikzpicture}[scale=1.7]
		\draw  (0,4) node (yaxis) [above] {$b_1$}
        	|- (4,0) node (xaxis) [right] {$b_2$};
		\draw [red] (0,0) coordinate (o) -- (2.5,4) coordinate (upper);
		\draw (o) -- (4,2.5) coordinate (lower);
		\draw (upper) node [above] {$b_1=mb_2$};
		\draw (lower) node [right] {$b_1=\sqrt m\, b_2$};
		\draw [dashed] (0,1.4) -- (0.875,1.4) coordinate (p);
		\draw (p)--(2.24,1.4) coordinate (q);
		\draw [dashed] (q)--(4,1.4);
		\draw (4,1.4) node [right] {$b_1=m$};
		\fill [red,opacity=0.2] (o)--(2.25,3.6)--(0,3.6)--cycle;
		\fill [blue, opacity=0.5] (o)--(3.6,2.25)--(3.6,0)--cycle;
		\fill [blue, opacity=0.5] (o)--(p)--(q)--cycle;
		\draw (0.9,3) node {2.7};
		\draw (3.25,1.7) node {2.8};
		\draw (3.02,1.1) node {2.7, 2.8};
		\draw (1.1,1.1) node {2.7};
	\end{tikzpicture}
	\caption{Theorems \ref{thm:generalupperlowerbd} and \ref{thm:generallowerbd} for $\vec{\bf b}=(b_1,b_2,1)$}
	\label{fig:initialpic}
\end{center}
\end{figure}
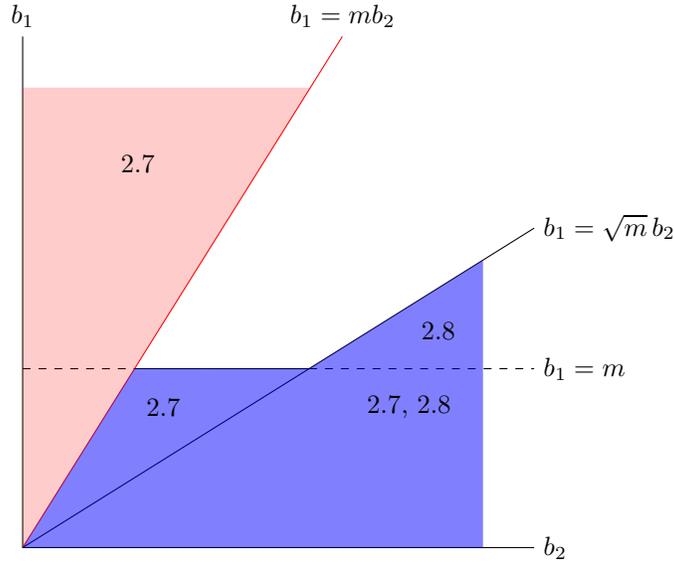

We are able to refine our results slightly via the following result.
\begin{thm}\label{thm:inversiveplane}
	For every odd prime power $q$ we have
	$f(q^2+1,H(q^2-q-1,q,1))=2.
	$\end{thm}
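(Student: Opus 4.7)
The plan is to establish the two inequalities separately. The lower bound $f(q^2+1, H(q^2-q-1,q,1)) \ge 2$ is immediate: since $H(q^2-q-1,q,1)$ has three edges, every two-edge subfamily of any hypergraph is automatically $H$-free.

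The content lies in the upper bound $\le 2$, for which I plan to exhibit a concrete hypergraph $F$ with exactly $q^2 + 1$ edges in which every triple of edges already forms a copy of $H(q^2-q-1, q, 1)$. The right combinatorial object for this is an inversive (M\"obius) plane $\pi$ of order $q$, whose existence for every prime power $q$ (in particular every odd prime power) is classical---for instance, as the Miquelian plane obtained from an elliptic quadric in $PG(3,q)$. Recall that such a $\pi$ has $q^2 + 1$ points and $q(q^2+1)$ circles of size $q+1$, and that any three distinct points lie on a unique common circle. Standard double-counting in $\pi$ then shows that each point lies on exactly $q(q+1)$ circles and each pair of points lies on exactly $q+1$ common circles.

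I would then take the vertex set of $F$ to be the set of circles of $\pi$, and include for each point $p$ of $\pi$ the edge $A_p = \{c \in V(F) : p \in c\}$, giving $|E(F)| = q^2 + 1$. Translating the three incidence counts above yields $|A_p| = q^2+q$, $|A_p \cap A_{p'}| = q+1$ for $p \ne p'$, and $|A_{p_1} \cap A_{p_2} \cap A_{p_3}| = 1$ for any three distinct points. A brief inclusion--exclusion then converts these intersection sizes into Venn-diagram parameters $(b_1, b_2, b_3) = (q^2 - q - 1,\, q,\, 1)$, so every three edges of $F$ lie in $\EIP_3$ with exactly the parameters of $H(q^2-q-1, q, 1)$, i.e., form a copy of it. Hence the largest $H$-free subfamily of $F$ has size at most $2$, which together with the lower bound gives equality.

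The main obstacle is conceptual rather than technical: recognizing that the parameter vector $(q^2+1,\, q^2-q-1,\, q,\, 1)$ matches precisely the point/circle incidence structure of an inversive plane of order $q$, which is exactly what pins the statement to prime-power $q$. Once this design-theoretic object is on the table, the remainder is a routine check of intersection sizes.
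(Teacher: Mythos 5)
Your proposal is correct and follows essentially the same route as the paper: take the inversive plane of order $q$, encode each point as the set of circles through it, and verify the intersection pattern $(q^2+q, q+1, 1)$, which by inclusion--exclusion gives exactly $H(q^2-q-1,q,1)$ on every triple. A minor stylistic difference: the paper invokes the Miquelian inversive plane and uses $2$-transitivity of its automorphism group to show that every pair of points lies on the same number of circles, whereas you observe (correctly, and more economically) that this is automatic from the $3$-design property of any inversive plane.
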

As a consequence, we obtain the following:
\begin{cor}
\label{cor:extendinversive}
	When $b_1\ge b_2^{\,2}-b_2-1$ and $b_2^{\,2}\ge m$, we have $f(m,H_3(b_1,b_2,1))=2$.
\end{cor}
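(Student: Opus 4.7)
The plan is to derive Corollary~\ref{cor:extendinversive} from Theorem~\ref{thm:inversiveplane} by post-processing the extremal hypergraph the theorem provides. Given $m,b_1,b_2$ satisfying the hypotheses, the strategy is to (i) invoke Theorem~\ref{thm:inversiveplane} with an odd prime power $q$ tied to $b_2$, (ii) restrict to exactly $m$ edges, and (iii) inflate each remaining edge by a set of fresh private vertices to raise the single-edge parameter up to $b_1$. The inequality $b_2^2 \ge m$ supplies enough edges for (ii) while $b_1 \ge b_2^2 - b_2 - 1$ guarantees a nonnegative number of inflation vertices per edge.

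In the cleanest case $b_2 = q$ for an odd prime power $q$, Theorem~\ref{thm:inversiveplane} furnishes a hypergraph $F$ on $q^2 + 1$ edges every three-edge subgraph of which is a copy of $H(q^2 - q - 1, q, 1) = H(b_2^2 - b_2 - 1, b_2, 1)$. Since $m \le b_2^2 \le q^2 + 1$, select any $m$ edges of $F$ to form $F_1$; the three-edge intersection pattern is inherited. For each $e \in E(F_1)$ attach a set $P_e$ of $b_1 - (b_2^2 - b_2 - 1) \ge 0$ new vertices, with the $P_e$ pairwise disjoint and disjoint from $V(F_1)$, and set $F' := \{e \cup P_e : e \in E(F_1)\}$. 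Because each $P_e$ lies in only one edge of $F'$, the pairwise intersection of any two edges of $F'$ remains $b_2 + 1$, the triple intersection of any three remains $1$, and each edge has size $(b_2^2 + b_2) + (b_1 - b_2^2 + b_2 + 1) = b_1 + 2b_2 + 1$. Hence every three-edge subgraph of $F'$ is an exact copy of $H(b_1, b_2, 1)$, forcing $\ex(F', H(b_1, b_2, 1)) \le 2$. Combined with the trivial bound $\ex(F', H(b_1, b_2, 1)) \ge 2$ (any two edges form an $H$-free subgraph, since $H$ has three edges), this gives $f(m, H(b_1, b_2, 1)) = 2$.

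The main obstacle is extending the argument when $b_2$ is not itself an odd prime power. The natural fix is to take $q$ as the largest odd prime power at most $b_2$ and augment every pairwise intersection of $F$ by $b_2 - q$ extra vertices belonging to exactly that pair, lifting pairwise intersections from $q + 1$ to $b_2 + 1$ without changing the triple intersection. Each edge of the augmented $F_1$ would then acquire $(m-1)(b_2 - q)$ new vertices, so appending private singletons as in step (iii) to reach size $b_1 + 2b_2 + 1$ demands $b_1 \ge (q^2 + q) + (m - 1)(b_2 - q) - 2b_2 - 1$; under the hypotheses this rearranges to $(b_2 - q)(b_2 + q + 2 - m) \ge 0$, i.e. $q \ge m - b_2 - 2$. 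The hard part is then guaranteeing an odd prime power in $[\max(\sqrt{m - 1},\, m - b_2 - 2),\, b_2]$ for every feasible pair $(m, b_2)$, which is a density-of-primes question.
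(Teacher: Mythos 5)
Your proof is correct and is essentially the paper's own argument: restrict the inversive-plane construction of Theorem~\ref{thm:inversiveplane} to $m$ of its $q^2+1$ edges and pad each edge with $b_1-(b_2^{\,2}-b_2-1)$ private vertices. The prime-power obstruction you flag in your final paragraph is genuine but is not resolved by the paper either --- the authors treat the corollary only ``asymptotically'' (using $b_1=b_2^{\,2}$ as an asymptotic approximation in Figure~\ref{fig:finalpic}), so your first two paragraphs already cover the intended scope of the proof.
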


Corollary \ref{cor:extendinversive} yields the following improvement on Figure \ref{fig:initialpic}. Note that we are using the parabola $b_1=b_2^{\,2}$ as an asymptotic approximation of Corollary \ref{cor:extendinversive}. Therefore on this parabola, $f(m,H_3)=2$. We also know, by virtue of Theorem \ref{thm:necessary_condition_for_f=2}, that in the white region to the right of the parabola and between the two lines, we have $f(m,H_3(b_1,b_2,1))>2$.

\begin{figure}[H]
\begin{center}
	\begin{tikzpicture}[scale=0.9]
		\draw (0,9) node (yaxis) [above] {$b_1$}
            |-(9,0) node (xaxis) [right] {$b_2$};
		\draw [red] (0,0) coordinate (o) -- (5,8) coordinate (upper);
		\draw (o) -- (8,5) coordinate (lower);
		\draw (upper) node [right] {$b_1=mb_2$};
		\draw (lower) node [right] {$b_1=\sqrt m b_2$};
		\fill [red,opacity=0.2] (o)--(4.5,7.2)--(0,7.2)--cycle;
		\fill [blue, opacity=0.5] (o)--(7.2,4.5)--(7.2,0)--cycle;
    	\draw [red, domain=10/9:4.5] plot (\x,9*\x^1.8/16);
		\draw [dashed, domain=0:10/9] plot (\x,9*\x^1.8/16);
		\draw (4.5,8.5) node [left] {$b_1=b_2^{\,2}$};
		\coordinate (invplane) at (10/9,50/72);
		\coordinate (critical) at (3.694,5.9104);
		\coordinate (critical2) at (10/9, 16/9);
		\draw [red] (critical2) -- (invplane);
		\draw [dashed] (invplane) -- (10/9,0);
		\draw (10/9,0) node [below] {$\sqrt m$};
		\begin{scope}
        	\clip [domain=10/9:2.85] plot (\x,9*\x^1.8/16)--(critical)--(critical2)--cycle;
        	\fill [red,opacity=0.2] (invplane)--(4,2.5)--(4.5,7.2)--(10/9,16/9)--cycle;
		\end{scope}
		\coordinate (left) at (0,50/72);
		\coordinate (critical3) at (250/576,50/72);
		\draw [dashed] (left)--(critical3);
		\draw (critical3)--(invplane);
		\draw (0,50/72) node [left] {$m$};
		\fill [blue,opacity=0.5] (invplane)--(critical3)--(o)--cycle;
		\draw [red,fill] (invplane) circle [radius=0.13];
		\draw [dashed] (critical3)--(250/576,0);
		\draw (250/576,-0.07) node [below] {$1$};
		\draw [dashed] (critical)--(3.694,0);
		\draw (3.694,-0.1) node [below] {$m$};
		\draw (1.9,6) node {$2.7$};
		\draw (6,1.9) node {$2.8$};
		\draw (1.47,1.75) node {$2.10$};
	\end{tikzpicture}
	\caption{$\vec{\bf b}=(b_1,b_2,1)$}
	\label{fig:finalpic}
\end{center}
\end{figure}
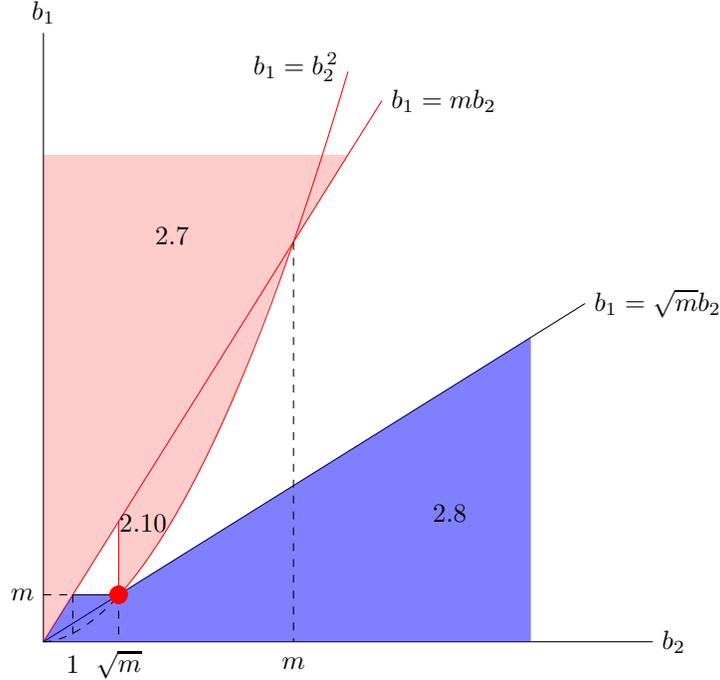


\section{Proofs of Theorems \ref{thm:sunflower} and \ref{thm:binaryinfo}}

In this section, we prove Theorems \ref{thm:sunflower} and \ref{thm:binaryinfo}, which answer question (\ref{eqn:mainqn}) for constant sequences. We use the following well-known facts about sunflowers and diagonal hypergraph Ramsey numbers. 

Recall that a $q$-sunflower is a hypergraph $\{A_1,\ldots,A_q\}$ such that $A_i\cap A_j = \bigcap_{s=1}^q A_s$. The celebrated Erd\H{o}s-Rado sunflower Lemma~\cite{erdos-rado-sunflower1960} states the following.
\begin{lemma}[Erd\H{o}s-Rado]
\label{lem:erdosradosunflower}
Let $H$ be an $r$-graph with $|H|=r!(\alpha-1)^r$. Then, $H$ contains an $\alpha$-sunflower.
\end{lemma}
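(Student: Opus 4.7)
The plan is to proceed by induction on the uniformity $r$, using the classical dichotomy between a large matching (which itself is a sunflower with empty core) and a popular vertex (which, after deletion, reduces to an $(r-1)$-uniform problem with a smaller but still sufficient number of edges).

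For the base case $r=1$, any family of $\alpha$ distinct singletons is automatically pairwise disjoint, so forms an $\alpha$-sunflower with empty core; since $1!(\alpha-1)^1=\alpha-1$, one extra edge does the job. For the inductive step, assuming the lemma for $(r-1)$-graphs, I would take a maximal matching $M\subseteq H$ of pairwise disjoint edges. If $|M|\ge \alpha$, then $M$ is already the desired sunflower (again with empty core), and we are done. Otherwise $|M|\le \alpha-1$, so the vertex set $T=\bigcup_{A\in M}A$ has $|T|\le r(\alpha-1)$. Maximality of $M$ forces every edge of $H$ to meet $T$, so by pigeonhole some $x\in T$ lies in at least
\[
\frac{|H|}{|T|}\ge \frac{r!(\alpha-1)^r}{r(\alpha-1)}=(r-1)!(\alpha-1)^{r-1}
\]
edges of $H$. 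Collecting those edges and deleting $x$ yields an $(r-1)$-graph $H'$ meeting the inductive hypothesis, so $H'$ contains an $\alpha$-sunflower $\{B_1,\ldots,B_\alpha\}$; re-adding $x$ to every $B_i$ gives an $\alpha$-sunflower in $H$ whose core contains $x$.

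The only potentially delicate point is matching the constants so that the pigeonhole step in each inductive layer still leaves at least $(r-1)!(\alpha-1)^{r-1}$ edges — this is why the factorial factor appears in the hypothesis $|H|=r!(\alpha-1)^r$. Everything else is essentially bookkeeping; there is no difficult extremal construction or probabilistic argument hiding in the proof, which is why this short lemma is the usual off-the-shelf tool invoked in Theorem~\ref{thm:sunflower}.
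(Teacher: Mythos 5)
Your proof is correct and is the standard Erd\H{o}s--Rado argument (matching versus popular-vertex dichotomy with induction on $r$); the paper itself does not prove this lemma but simply cites \cite{erdos-rado-sunflower1960}, so there is no in-paper proof to compare against. You are also right to flag the constants: as literally stated with $|H|=r!(\alpha-1)^r$ the lemma fails already for $r=1$ (take $\alpha-1$ disjoint singletons), and the correct hypothesis is $|H|>r!(\alpha-1)^r$, which is exactly what your base case and pigeonhole step actually use; this off-by-one is harmless for the paper's application in Theorem~\ref{thm:sunflower}.
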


Next, recall that the hypergraph Ramsey number $r_\ell(s,t)$ is the minimum $N$ such that any $\ell$-graph on $N$ vertices, admits a clique of size $s$ or an independent set of size $t$. The following is a well-known theorem of Erd\H{o}s, Hajnal and Rado \cite{erdos-hajnal-rado-partitionrelations1965}:
\begin{thm}
\label{thm:ramseythm}
There are absolute constants $c(\ell),c'(\ell)$ such that
\[
\twr_{\ell-1}(c't^2)< r_\ell(t,t) < \twr_\ell(ct).
\]
Here the tower function $\twr_k(x)$ is defined by $\twr_0(x)=1$ and $\twr_{i+1}(x)=2^{\twr_i(x)}$.
\end{thm}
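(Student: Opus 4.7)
The plan is to treat the two inequalities separately: the upper bound by induction on $\ell$ with a greedy ``majority'' argument, and the lower bound by iteration of the Erd\H{o}s--Hajnal stepping-up construction. Since this is a classical result, in the paper itself one would simply cite \cite{erdos-hajnal-rado-partitionrelations1965} and not reproduce the proof.

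For the upper bound $r_\ell(t,t) < \twr_\ell(ct)$, I would proceed by induction on $\ell$. The base case $\ell=2$ is the Erd\H{o}s--Szekeres bound $r_2(t,t) \le \binom{2t-2}{t-1} < 4^t$, which fits inside $\twr_2(ct)$ for an appropriate constant. For the induction step, given a $2$-coloring $\chi$ of $\binom{[N]}{\ell}$ with $N$ large, I would greedily build a sequence of vertices $v_1, v_2, \ldots$ together with shrinking ``majority'' sets $V_1 \supseteq V_2 \supseteq \cdots$, so that after step $i$, for every $T \in \binom{\{v_1,\ldots,v_i\}}{\ell-1}$ every edge $T \cup \{w\}$ with $w \in V_i$ carries the same color $\chi'(T)$. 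At step $i+1$ one picks any $v_{i+1} \in V_i$ and restricts $V_i$ by majority vote on each of the new $(\ell-1)$-subsets containing $v_{i+1}$, which costs a factor of at most $2^{\binom{i}{\ell-2}}$. Starting from $N = \twr_\ell(ct)$ vertices allows one to push $i$ all the way to $M := r_{\ell-1}(t,t)$. Applying the induction hypothesis to the induced $(\ell-1)$-coloring $\chi'$ on $\{v_1,\ldots,v_M\}$ yields a monochromatic $(\ell-1)$-clique of size $t$, which by construction lifts to a monochromatic $\ell$-clique in the original coloring.

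For the lower bound $r_\ell(t,t) > \twr_{\ell-1}(c't^2)$, I would iterate the Erd\H{o}s--Hajnal stepping-up lemma, using as a base the $\ell=3$ estimate $r_3(t,t) > 2^{c't^2}$ obtained from a random $2$-coloring. One step of stepping-up: given a $2$-coloring $\chi$ of $\binom{[n]}{\ell-1}$ with no monochromatic $t$-clique, identify the vertices of a new $\ell$-graph with $\{0,1\}^n$ under the lexicographic order; for $v_1 < \cdots < v_\ell$ let $\delta_i$ denote the position of the most significant bit where $v_i$ and $v_{i+1}$ differ. Define the new color on $\{v_1,\ldots,v_\ell\}$ from $\chi(\{\delta_1,\ldots,\delta_{\ell-1}\})$ together with an auxiliary bit encoding the monotonicity pattern of $(\delta_1,\ldots,\delta_{\ell-1})$. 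A careful case analysis shows that any monochromatic $(2t-2)$-set in the new coloring must produce either a monochromatic $t$-clique in $\chi$ or an unbounded monotone sequence of $\delta$'s, both impossible; this yields $r_\ell(t,t) > 2^{r_{\ell-1}(t,t)/c}$, and iterating down to $\ell=3$ produces the tower.

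The main obstacle is the stepping-up case analysis: one has to track the relative monotonicity of the $\delta_i$'s across all $\ell$-subsets of a hypothetical monochromatic set, and verify a pigeonhole argument that produces the required contradiction. This is the delicate step responsible for the quadratic $t^2$ loss at the base level and is notoriously fiddly to execute in full detail. The upper bound, by contrast, is a routine double-counting induction.
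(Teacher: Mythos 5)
The paper does not prove this statement at all: it is quoted as a classical theorem of Erd\H{o}s, Hajnal and Rado and used as a black box (only its consequence, the Ramsey-type statement in~(\ref{eqn:ramseylemma}), is applied later), so there is no in-paper argument to compare against. Your decision to cite rather than reprove is exactly what the authors do, and your outline of the two halves --- the Erd\H{o}s--Rado ``majority/cone'' induction giving $r_\ell(t,t)\le 2^{O(r_{\ell-1}(t,t)^{\ell-1})}$ for the upper bound, and the random-colouring base $r_3(t,t)>2^{c't^2}$ iterated through the Erd\H{o}s--Hajnal stepping-up construction for the lower bound --- is the correct standard route to both inequalities. The only caveat is that what you have written is a roadmap rather than a proof (in particular the stepping-up case analysis, which you rightly flag as the delicate point, is not carried out), but since the paper itself supplies no proof, a citation plus this sketch is entirely adequate.
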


The right side of this theorem can be rewritten as follows:

\begin{equation}
	\label{eqn:ramseylemma}
	\begin{array}{l}\mbox{Let }F\mbox{ be any }\ell\mbox{-graph on }n\mbox{ vertices. Then there is an absolute constant }c_\ell\mbox{ such that there is}\\
	\mbox{a subgraph }F'\subset F\mbox{ with }|V(F')|\ge c_\ell\cdot \log_{(\ell)}(n)\mbox{, which is either a clique or an independent}\\
	\mbox{set. Here }\log_{(\ell)}\mbox{ denotes iterated logarithms.}
	\end{array}
\end{equation}
\medskip
 
Now we are prepared to prove Theorems \ref{thm:sunflower} and \ref{thm:binaryinfo}. Recall that a hypergraph is uniform if all its edges have the same size, otherwise it is non-uniform.

\medskip
\begin{proof}[Proof of Theorem \ref{thm:sunflower}]
	
	Fix a family of hypergraphs $\Hh$ with $n$ members, $\Hh=\{H_1,\ldots,H_n\}$. Let $H_i\in \Hh$ be an $r$-uniform $q$-sunflower with core $W$. For every $m\ge q$, let $F$ be an $r$-uniform $m$-sunflower with core $W$. Then every subset of $F$ of size $q$ is isomorphic to $H_i$, thus proving $f(m,\Hh)\le q-1$.
	
	Now, suppose $\Hh$ contains $\ell$ many uniform hypergraphs labelled $H_1,\ldots, H_\ell$, and $(n-\ell)$ many non-uniform hypergraphs labelled $H_{\ell+1},\ldots,H_n$. For $1\le i\le \ell$, let $r_i$ be the uniformity of $H_i$. Given any hypergraph $F$ with $m$ edges, we find a large $\Hh$-free subgraph as follows. First, since $H_n$ is non-uniform, it contains a set of size $a$ and a set of size $b\neq a$. Clearly, at least half of the edges of $F$ have size $\neq a$, or at least half of them have size $\neq b$. Take the appropriate subgraph $F_1\subset F$ of size $\ge \frac m2$. By successively halving the sizes, we obtain a chain of hypergraphs $F_{n-\ell}\subset F_{n-\ell-1}\subset \cdots \subset F_1\subset F$ such that $F_{n-\ell}$ is $\{H_{\ell+1},\ldots, H_n\}$-free, and $|F_{n-\ell}|\ge \frac m{2^{n-\ell}}$.
	
	We now deal with the uniform part of $\Hh$. Notice that by Lemma \ref{lem:erdosradosunflower}, any $r$-graph $G$ with $|G|=m$ contains an $\alpha$-sunflower, as long as $m>r!\alpha^r$. Taking $\alpha = \left\lfloor c_r m^{1/r}\right\rfloor$ where $c_r = \left((2r)!\right)^{-1/r}$, satisfies the required condition. So, every $r$-graph $G$ of size $m$ contains a sunflower of size $\left\lfloor c_r m^{1/r}\right\rfloor$.
	
	Since $H_\ell$ is $r_\ell$-uniform, we note that either $F_{n-\ell}$ contains a subgraph of size $\frac 12 |F_{n-\ell}|$ which has no sets of size $r_\ell$ (and hence is $H_\ell$-free), or there is a subgraph of size $\frac 12 |F_{n-\ell}|$ which is $r_\ell$-uniform. In the second case, using Lemma \ref{lem:erdosradosunflower} on this subgraph, we obtain an $H_\ell$-free subgraph of $F_{n-\ell}$ of size at least
	\[
	\min\left\{\frac m{2^{n-\ell+1}}\ ,\  c_{r_\ell}\left(\frac m{2^{n-\ell+1}}\right)^{\frac1{r_\ell}}\right\}\ge c'_\Hh \cdot m^{\frac 1{r_\ell}}.
	\]
	
	We iterate the same argument $\ell-1$ more times, to finally obtain a constant $C_\Hh$ and a subgraph $F_\ell\subset F$ such that $F_\ell$ is $\Hh$-free, and
	\[
	|F_\ell|\ge C_\Hh\cdot m^{\frac 1{r_1\ldots r_\ell}}.
	\]
\end{proof}

\medskip

\begin{proof}[Proof of Theorem \ref{thm:binaryinfo}]

	Let $F=\{F_1,\ldots,F_m\}$ have size $m$. Suppose $\Hh$ is $\ell$-uneven for some $1\le \ell <k$. Then, there are distinct subsets $I,J\in\binom{[k]}{\ell}$, such that $\bigcap_{i\in I}A_i = \varnothing$ and $\bigcap_{j\in J}A_j\neq\varnothing$ for every $H=\{A_1,\ldots,A_k\}\in\Hh$. Then, we construct an $\ell$-graph $G$ with vertex set $F$, and hyperedges $\left\{\{F_1,\ldots,F_\ell\}:\ F_1\cap\cdots\cap F_\ell=\varnothing\right\}$. By (\ref{eqn:ramseylemma}), there is a a constant $c_\ell$ and a subset $F'\subseteq F$ of size $\ge c_\ell\cdot\log_{(\ell)}(m)$, such that $F'$ is either a clique or an independent set in $G$. In either case, $F'$ is $\Hh$-free.
	
	On the other hand, if $\Hh$ is such that that for every $1\le \ell\le k$ and $I,J \in\binom{[k]}\ell$, $\bigcap_{i\in I} A_{i}\neq \varnothing \iff \bigcap_{j\in J}A_{j}\neq \varnothing$, then there exists $L$ such that for every $1\le \ell\le L$, the $\ell$-fold intersections $A_{i_1}\cap\cdots\cap A_{i_\ell}$ are nonempty, whereas for $L<\ell\le k$, the $\ell$-fold intersections $A_{i_1}\cap\cdots\cap A_{i_\ell}$ are all empty for $i_1,\ldots,i_\ell\in [k]$. 
	
	For every $m\ge k$, we construct a hypergraph $F=\{F_1,\ldots,F_m\}$ in the following manner. Consider the bipartite graph $B=\left([m],\binom{[m]}{L}\right)$ where $x\in [m]$ is adjacent to $y\in\binom{[m]}{L}$ iff $x\in y$. Let $F_i$ be the set of neighbors in $B$ of the vertex $i\in [m]$. Notice that for any $i_1,\ldots, i_\ell$,
\[
	F_{i_1}\cap\ldots\cap F_{i_\ell} = \left\{\begin{array}{lc}\varnothing, & \ell > L,\\ \neq \varnothing, & \ell \le L.\end{array}\right.
\]
	This construction therefore shows that $f(m,\Hh)= k-1$.
\end{proof}


\section{Proof of Theorem \ref{thm:generalupperlowerbd}}
In this section, we prove Theorem \ref{thm:generalupperlowerbd}. We begin with some preliminary exploration of the family $\EIP_k$.

First, we make the crucial observation regarding question (\ref{eqn:mainqn2}) that every sequence of $k$-edge hypergraphs $\{H_m\}$ such that $f(m,H_m)$ is bounded, can only have finitely many members not in $\EIP_k$. This follows immediately from Lemma \ref{lem:EIP}. Furthermore, for any $H(\vec{b})\in\EIP_k$, one can explicitly determine the relation between the intersection sizes and the parameters $b_1,\ldots, b_k$ by inclusion-exclusion. We state this relation in Lemma \ref{lem:a_to_b}.

\begin{lemma}
	\label{lem:EIP}
	Suppose $H=\{A_1,\ldots, A_k\}$ satisfies the following for some $1\le \ell\le k$: there are two sets of indices $I,J\in \binom{[k]}\ell$ such that $|\bigcap_{i\in I}A_{i}|=a$ and $|\bigcap_{j\in J}A_{j}|=b$ with $a\neq b$. Then there is a constant $c_\ell$ such that $f(m,H)\ge c_\ell\cdot \log_{(\ell)}(m)$.
\end{lemma}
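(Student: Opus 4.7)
The plan is to mimic the proof of Theorem \ref{thm:binaryinfo}, replacing the binary dichotomy ``$\ell$-fold intersection empty vs.\ nonempty'' there by the dichotomy ``$\ell$-fold intersection has size $a$ vs.\ does not.''

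Given $F=\{F_1,\ldots,F_m\}$ of size $m$, I would introduce the auxiliary $\ell$-uniform hypergraph $G$ on vertex set $[m]$ by declaring $\{i_1,\ldots,i_\ell\}\in G$ iff $|F_{i_1}\cap\cdots\cap F_{i_\ell}|=a$. Applying the Ramsey statement in \eqref{eqn:ramseylemma} to $G$ yields an absolute constant $c_\ell>0$ and a subset $S\subseteq[m]$ with $|S|\ge c_\ell\,\log_{(\ell)}(m)$ that is either a clique or an independent set in $G$. Setting $F':=\{F_i:i\in S\}$, this means that either every $\ell$-fold intersection of members of $F'$ has size exactly $a$, or none has.

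To finish, I would verify that $F'$ is $H$-free in both cases. The essential observation is that any isomorphism between a $k$-subset $\{F_{j_1},\ldots,F_{j_k}\}\subseteq F'$ and $H=\{A_1,\ldots,A_k\}$ induces a permutation $\sigma$ of $[k]$ sending $F_{j_s}\leftrightarrow A_{\sigma(s)}$; hence it carries $\bigcap_{s\in\sigma^{-1}(I)}F_{j_s}$ bijectively onto $\bigcap_{i\in I}A_i$ and similarly for $J$. Consequently a copy of $H$ in $F'$ must exhibit both an $\ell$-fold intersection of size $a$ and one of size $b$. In the clique case all $\ell$-fold intersections in $F'$ have size $a\ne b$, ruling out the appearance of size $b$; in the independent case none has size $a$, ruling out the appearance of size $a$. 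Either way $F'$ contains no copy of $H$, giving $f(m,H)\ge c_\ell\log_{(\ell)}(m)$ as required.

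The argument is essentially a one-step reduction to hypergraph Ramsey theory, so I foresee no serious obstacle and no delicate estimate is needed. The only point that merits care is the isomorphism-invariance of the multiset of $\ell$-fold intersection sizes, which follows immediately from the fact that any edge-preserving vertex bijection respects intersections of edges. The constant $c_\ell$ is exactly the one supplied by \eqref{eqn:ramseylemma} and depends only on $\ell$, not on $a$, $b$, $k$, or $H$.
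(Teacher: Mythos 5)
Your proposal is correct and is essentially identical to the paper's proof: the paper also builds the auxiliary $\ell$-uniform hypergraph on the edge set of $F$ with hyperedges being those $\ell$-subsets whose intersection has size $a$, applies the Ramsey bound (\ref{eqn:ramseylemma}), and concludes that the resulting clique or independent set is $H$-free. You have merely spelled out the final verification (that both a size-$a$ and a size-$b$ $\ell$-fold intersection must occur in any copy of $H$) which the paper leaves implicit.
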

\begin{proof}
	Let $F$ be any hypergraph with $m$ edges. Construct an $\ell$-graph $G$ with $F$ as its vertex set, and hyperedges 
	\[
	\left\{\{B_1,\ldots,B_\ell\}: |B_1\cap\cdots\cap B_\ell|=a\right\}.
	\]
	By (\ref{eqn:ramseylemma}), there exists a subset $F'\subseteq F$ of size $c_\ell\cdot\log_{(\ell)}(m)$ which is either a clique or an independent set in $G$. In either case, $H$ cannot be contained in $F'$.
\end{proof}

Lemma \ref{lem:EIP} implies that if there are infinitely many $m$ such that $H_m\not\in\EIP_k$, then for each such non-EIP hypergraphs we have $f(m,H_m)\ge c'\cdot \log^{(k)}(m)$, where $c'$ is the absolute constant $c'=\min\{c_1,\ldots,c_k\}$. This is an infinite subsequence of $\{H_m\}$. Therefore, if $f(m,H_m)$ is bounded, then by looking at the tail of $\{H_m\}$, we may assume WLOG that $H_m\in\EIP_k$ for every $m\ge 1$.

Recall that hypergraphs $H\in\EIP_k$ are characterized by the length $k$-vector $\vec{b}$, and for every sequence of hypergraphs $\{H_m\}_{m=1}^{\infty}$, we have a corresponding sequence of length $k$ vectors $\vec{\bf b}$.

We now state the relation between the intersection sizes and the parameters $b_1,\ldots, b_k$ for $H(\vec{b})\in \EIP_k$.

\begin{lemma}
	\label{lem:a_to_b}
	Let $H(\vec b)\in \EIP_k$, and $a_i=|A_1\cap\cdots \cap A_i|$, for each $1\le i\le k$. Then,
\begin{equation}
\label{eqn:a=sum(b)}
b_i = a_i - \binom{k-i}{1}a_{i+1}+\binom{k-i}{2}a_{i+2}-\cdots + (-1)^{k-i}\binom{k-i}{k-i}a_k.
\end{equation}
\end{lemma}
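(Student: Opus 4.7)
The plan is to obtain the identity by applying inclusion--exclusion to the defining expression of $b_i$ and then collapsing every $\ell$-fold intersection to $a_\ell$ using the $\EIP_k$ property. Since $H(\vec b)\in\EIP_k$, all $\ell$-fold intersections of distinct edges have size $a_\ell$, so $b_i$ does not depend on which $I\in\binom{[k]}{i}$ we choose; we may therefore take $I=\{1,\ldots,i\}$ throughout.

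Set $S=A_1\cap\cdots\cap A_i$, so that $|S|=a_i$. By the definition of $b_i$ and the standard inclusion--exclusion formula for a set minus a union (with $J=\varnothing$ contributing $|S|=a_i$),
\[
b_i=\left|S\setminus\bigcup_{j=i+1}^{k}A_j\right|=\sum_{J\subseteq\{i+1,\ldots,k\}}(-1)^{|J|}\left|S\cap\bigcap_{j\in J}A_j\right|.
\]
For any $J\subseteq\{i+1,\ldots,k\}$ with $|J|=s$, the set $S\cap\bigcap_{j\in J}A_j$ is an intersection of $i+s$ distinct edges of $H(\vec b)$, hence by $\EIP_k$ has cardinality $a_{i+s}$. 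Grouping the terms of the sum by $s=|J|$ and using that there are $\binom{k-i}{s}$ such subsets yields exactly (\ref{eqn:a=sum(b)}).

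There is essentially no obstacle here: the argument is a one-line inclusion--exclusion, and the only nontrivial input is the equality of all $\ell$-fold intersection sizes guaranteed by $\EIP_k$. As a byproduct, the right-hand side depends only on $a_i,\ldots,a_k$ and not on the choice of $I$, which reconfirms that each $b_i$ is well-defined on $\EIP_k$ as asserted in the preceding discussion.
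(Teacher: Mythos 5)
Your proof is correct and takes the same approach the paper intends: the paper omits the proof entirely (marking Lemma~\ref{lem:a_to_b} with just a $\square$ and remarking beforehand that the relation follows ``by inclusion-exclusion''), and your argument is precisely that inclusion--exclusion computation, with the $\EIP_k$ property used to collapse each $|S\cap\bigcap_{j\in J}A_j|$ to $a_{i+|J|}$.
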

\hfill{$\square$}


Before proving Theorem \ref{thm:generalupperlowerbd}, we prove an auxiliary upper bound in Lemma \ref{lem:linearmatrixupperbd}, which provides a better upper bound on $f(m,H(\vec{\bf b}))$ with tighter constraints on $\vec{\bf b}$. 

\begin{lemma}
	\label{lem:linearmatrixupperbd}
	Suppose $\vec{\bf b}=(b_1,\ldots,b_k)$ is such that $b_i\ge 0$, and for every $1\le i\le k-1$,
	\begin{equation}
	\label{eqn:cond_on_bi}
	\sum_{j=i}^{k-1}(-1)^{j-i}\binom{m-k+j-i-1}{j-i}b_j\ge 0.
	\end{equation}
	
	Then $f(m,H(b_1,\ldots,b_k))=k-1$.
\end{lemma}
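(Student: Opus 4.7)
The lower bound $f(m,H(\vec b))\ge k-1$ is immediate since $H(\vec b)$ has $k$ edges, so every hypergraph with at most $k-1$ edges is trivially $H(\vec b)$-free. The content of the lemma is the matching upper bound, which I plan to establish by constructing an $m$-edge hypergraph $F=\{F_1,\ldots,F_m\}$ in which every $k$-subset of edges forms a copy of $H(\vec b)$; then no $H(\vec b)$-free subgraph of $F$ can have $k$ or more edges.

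The construction will be fully symmetric: the common intersection size $A_\ell:=|F_{i_1}\cap\cdots\cap F_{i_\ell}|$ should depend only on $\ell$. Applying Lemma~\ref{lem:a_to_b} within a single $k$-subset shows that the subset realizes $H(\vec b)$ intrinsically iff
\[
A_\ell=\sum_{s=\ell}^{k}\binom{k-\ell}{s-\ell}b_s\quad(1\le\ell\le k).
\]
For $\ell>k$ there is freedom; the key choice is $A_\ell=b_k$, which amounts to inserting $b_k$ ``universal'' vertices that lie in every edge of $F$. To realize $F$ with these intersection sizes, partition the ground set by membership pattern: for each $I\subseteq[m]$ with $|I|=j$, include $C_j$ vertices contained in exactly $\{F_i\}_{i\in I}$. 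The quantities $A_\ell$ and $C_j$ are M\"obius inverses,
\[
A_\ell=\sum_{j=\ell}^{m}\binom{m-\ell}{j-\ell}C_j,\qquad C_j=\sum_{\ell=j}^m(-1)^{\ell-j}\binom{m-j}{\ell-j}A_\ell,
\]
and the construction is feasible precisely when every $C_j\ge 0$.

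The core calculation is to simplify $C_j$ under the above choice of $A_\ell$. I would split the sum at $\ell=k$, apply the Vandermonde-type identity $\sum_t(-1)^t\binom{a}{t}\binom{n-t}{r-t}=\binom{n-a}{r}$ to the $\ell\le k$ portion, and use $\binom{k-m}{r}=(-1)^r\binom{m-k+r-1}{r}$. The contribution from $\ell>k$ (coming from the universal vertices) should cancel the $s=k$ term from the other sum, leaving
\[
C_j=\sum_{s=j}^{k-1}(-1)^{s-j}\binom{m-k+s-j-1}{s-j}b_s\quad(1\le j\le k-1),
\]
with $C_j=0$ for $k\le j<m$ and $C_m=b_k\ge 0$. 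The hypothesis of the lemma is exactly the statement that every $C_j\ge 0$, so $F$ exists. A short final check, again using the same identity, confirms that each $k$-subset of $F$ really is a copy of $H(\vec b)$: for $J'\subseteq\{i_1,\ldots,i_k\}$ of size $\ell$, the intrinsic Venn region has size $\sum_{s=0}^{m-k}\binom{m-k}{s}C_{\ell+s}$, which collapses to $b_\ell$. The main obstacle is engineering the $b_k$ cancellation: the naive extension $A_\ell=0$ for $\ell>k$ would yield a strictly stronger condition involving $b_k$, and it is the universal-vertex extension $A_\ell=b_k$ that makes $b_k$ drop out, producing exactly the lemma's hypothesis.
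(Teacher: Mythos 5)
Your proposal is correct and is essentially the paper's argument repackaged: the paper builds the same fully symmetric hypergraph (its $F_m^{d_1,\ldots,d_k}$, where $d_j$ for $1\le j\le k-1$ equals your $C_j$, $d_k=b_k$ plays the role of your $C_m$, and $C_j=0$ for $k\le j<m$), and verifies that every $k$-edge subgraph is a copy of $H(\vec b)$ via the matrix identity $B_{k,k}DW'=I_k$ of Proposition~\ref{prop:apndx-BDW'=I}, which encodes exactly the Vandermonde-convolution computation you outline; your M\"obius-inversion framing and the paper's forward construction plus matrix check are the same idea read in opposite directions. One caveat: your ``core calculation'' is described rather than carried out, so a complete write-up still needs to execute the binomial cancellation (the paper does this in Lemma~\ref{lem:vdm_convolution} and the Appendix), but the plan is sound, the choice $A_\ell=b_k$ for $\ell>k$ is the right one, and your stated formula for $C_j$ is correct.
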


\noindent{\it Proof of Lemma \ref{lem:linearmatrixupperbd}.} Let $\vec{\bf b}$ satisfy the restrictions given in (\ref{eqn:cond_on_bi}). Note that we need to construct a hypergraph sequence $\{F_m\}_{m=1}^\infty$, such that every $k$-edge subgraph of $F_m$ is isomorphic to $H(\vec{b})$. To achieve this, we define the following general construction:
\medskip

\paragraph{{\bf Construction $F_m^{d_1,\ldots,d_k}$:}}

Given $d_1,\ldots, d_k\ge 0$ and $m\ge k$, let $B=([m],Y)$ be the bipartite graph with parts $[m]$ and $Y$, where $Y$ is defined as follows.
For $1\le\ell\le k$ and $1\le j\le d_\ell$, let
\[
Y_j^\ell =	\left\{
\begin{array}{cc} \{ v_j^S: S \in {[m] \choose \ell} \}, & \ \ell < k \\
\{ w_j\}, &\ \ell = k
\end{array}
\right\}
\]
and suppose that $v_j^S\neq v_{j'}^{S'}$ for every $(j,S)\neq(j',S')$. Then
\[
Y=\bigcup_{\ell=1}^k\bigcup_{j=1}^{d_\ell}Y_j^\ell.
\]
For $x\in[m]$ and $v_j^S\in Y$, let $(x,v_j^S)\in E(B)$ iff $x\in S$, and let $(x,w_j)\in E(B)$ for every $x\in [m]$ and $w_j\in Y$. Then, define $F_m^{d_1,\ldots,d_k}=\{A_1,\ldots,A_m\}$, where $A_i = N_B(i)\subset Y$ for $i=1,\ldots,m$.
\hfill{$\blacksquare$}

\medskip		

\noindent For example, the construction $F_4^{1,2,3}$ is given by:
\[
\left\{
\begin{aligned}
\ A_1&=\left\{v^1_1; v^{12}_1,v^{12}_2, v^{13}_1,v^{13}_2, v^{14}_1,v^{14}_2; w_1, w_2, w_3\right\}\\
\ A_2&=\left\{v^2_1; v^{12}_1,v^{12}_2, v^{23}_1,v^{23}_2, v^{24}_1,v^{24}_2; w_1, w_2, w_3\right\}\\
\ A_3&=\left\{v^3_1; v^{13}_1,v^{13}_2, v^{23}_1,v^{23}_2, v^{34}_1,v^{34}_2; w_1, w_2, w_3\right\}\\
\ A_4&=\left\{v^4_1; v^{14}_1,v^{14}_2, v^{24}_1,v^{24}_2, v^{34}_1,v^{34}_2; w_1, w_2, w_3\right\}
\end{aligned}\right\}.
\]
Informally, $A_i$ consists of one vertex $v_1^i$ corresponding to $\{i\}$, two vertices $v_1^{ij}$ and $v_2^{ij}$ corresponding to two-element subsets $\{i,j\}$, and three vertices $w_1,w_2,w_3$ that are in the common intersection of all the $A_i$'s, $1\le i\le 4$.

\medskip
We observe the following property of the intersection sizes of the edges of $F_m^{d_1,\ldots,d_k}$.
\begin{claim}
	\label{clm:a=sum(f)}
	$F_m^{d_1,\ldots,d_k}\in \EIP_k$. Furthermore, for $1\le i\le k$ and any $i$-edge subgraph $\{A_{r_1},\ldots, A_{r_i}\}\subset F_m^{d_1,\ldots,d_k}$, the size of the common intersection $a_i:=|A_{r_1}\cap\cdots\cap A_{r_i}|$ is given by
	\begin{equation}
	\label{eqn:a=sum(f)}
	a_i=d_i+\binom{m-i}{1}d_{i+1}+\cdots+\binom{m-i}{k-1-i}d_{k-1}+d_k.
	\end{equation}
\end{claim}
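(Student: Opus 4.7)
The plan is to prove both assertions of the claim by a single direct counting argument, leveraging the symmetry built into the construction. First, I would note that the symmetric group $S_m$ acts on $F_m^{d_1,\ldots,d_k}$ as a group of automorphisms: any permutation $\sigma$ of $[m]$ extends to a bijection on $Y$ by sending $v_j^S \mapsto v_j^{\sigma(S)}$ and fixing the vertices $w_j$, and this preserves adjacency in $B$ and hence maps $A_i \mapsto A_{\sigma(i)}$. Consequently, for any fixed $i$, the size of the $i$-fold intersection $|A_{r_1}\cap\cdots\cap A_{r_i}|$ depends only on $i$ and not on the specific indices $r_1,\ldots,r_i$, which is exactly the defining condition of $\EIP_k$.

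Next, I would compute this common intersection size by fixing an arbitrary $i$-subset $\{r_1,\ldots,r_i\}\subseteq[m]$ and asking which vertices of $Y$ lie in every $A_{r_s}$. The vertices split naturally into two types. A vertex $v_j^S$ with $S\in\binom{[m]}{\ell}$ and $\ell<k$ lies in $A_{r_s}$ iff $r_s\in S$, so it lies in the full intersection iff $\{r_1,\ldots,r_i\}\subseteq S$; this forces $\ell\geq i$, and for each such $\ell$ the number of admissible $S$ equals $\binom{m-i}{\ell-i}$, each contributing $d_\ell$ many vertices. Vertices of the form $w_j$ are adjacent to every element of $[m]$ by construction, so all $d_k$ of them lie in the intersection.

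Summing these contributions gives
\[
a_i \;=\; \sum_{\ell=i}^{k-1}\binom{m-i}{\ell-i}d_\ell \;+\; d_k \;=\; d_i+\binom{m-i}{1}d_{i+1}+\cdots+\binom{m-i}{k-1-i}d_{k-1}+d_k,
\]
which is the formula in \eqref{eqn:a=sum(f)}. I do not expect a real obstacle here: the claim is essentially a bookkeeping exercise once the symmetry is noted. The only mild points of care are the boundary cases $\ell=i$ (where $\binom{m-i}{0}=1$ correctly recovers the $d_i$ term) and the separate treatment of level $k$, since the vertices $w_j$ carry no associated subset $S$ and must be counted uniformly.
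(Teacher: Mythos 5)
Your proof is correct and follows essentially the same route as the paper's: a direct count of which vertices of $Y$ lie in all of $A_{r_1},\ldots,A_{r_i}$, with $v_j^S$ contributing iff $\{r_1,\ldots,r_i\}\subseteq S$ (giving $\binom{m-i}{\ell-i}d_\ell$ at each level $\ell<k$) and the $w_j$ contributing $d_k$. The paper organizes the count by partitioning the intersection into the sets $U_{G'}$ of vertices lying in \emph{exactly} the edges of $G'\supseteq G$, while you count membership directly and add a (valid but not strictly necessary) $S_m$-symmetry argument for the $\EIP_k$ assertion; these are cosmetic differences only.
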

\noindent {\it Proof of Claim \ref{clm:a=sum(f)}}. 
Suppose $G=\{A_{r_1},\ldots,A_{r_i}\}\subset F_m^{d_1,\ldots,d_k}$. We shall now count $|A_{r_1}\cap\cdots\cap A_{r_i}|$. For a fixed hypergraph $F_m^{d_1,\ldots,d_k}\supseteq G'\supseteq G$, let $U_{G'}$ denote the set of all vertices of $F_m^{d_1,\ldots,d_k}$ which are in all the edges of $G'$ but none of the edges of $F_m^{d_1,\ldots,d_k}\setminus G'$. Notice that $A_{r_1}\cap\cdots\cap A_{r_i}$ is a disjoint union of $U_{G'}$'s, $G'\supseteq G$. Therefore,
\begin{equation}
\label{eqn:proofclaimeq}
|A_{r_1}\cap\cdots\cap A_{r_i}| = \sum_{G'\supseteq G}|U_{G'}| = \sum_{G'\supseteq G}\left|\bigcap_{X\in G'} X\setminus \bigcup_{X\not\in G'} X\right|.
\end{equation}
Fix a $G'\supseteq G$. Let $G'=\{A_{r_1},\ldots,A_{r_i},A_{s_1},\ldots,A_{s_{|G'|-i}}\}$. We observe that,
\begin{itemize}
	\item For $i\le |G'|<k$, $U_{G'}$ consists exactly of the vertices $\left\{v_j^{\{r_1,\ldots,r_i,s_1,\ldots,s_{|G'|-1}\}}: 1\le j\le d_{|G'|}\right\}$.
	\item For $k\le |G'|<m$, $\bigcap_{X\in G'}X = \{w_1,\ldots, w_{d_k}\} \subseteq \bigcup_{X\not\in G'}X$, thus $U_{G'}=\varnothing$.
	\item For $|G'|= m$, $U_{G'} = \bigcap_{X\in G'}X = \{w_1,\ldots, w_{d_k}\}$.
\end{itemize}
Therefore,
\[
|U_{G'}|=\left\{\begin{array}{cc}
d_{|G'|}, & i\le|G'|< k,\\
0,        & k\le|G'|<m,\\
d_k,      & |G'|=m.
\end{array}\right.
\]
Plugging back these values into (\ref{eqn:proofclaimeq}), we get
\[
a_i=d_i+\binom{m-i}{1}d_{i+1}+\cdots+\binom{m-i}{k-1-i}d_{k-1}+d_k
\]
for every $1\le i \le k$.
\hfill{$\blacksquare$}

\medskip

Now we return to the proof of Lemma \ref{lem:linearmatrixupperbd}. Given a length $k$ vector $\vec b\ge 0$ which satisfies (\ref{eqn:cond_on_bi}) for $1\le i\le k-1$, let $d_i$ be the left hand side of (\ref{eqn:cond_on_bi}), i.e.,
\[
d_i:=\sum_{j=i}^{k-1}(-1)^{j-i}\binom{m-k+j-i-1}{j-i}b_j,
\]
and let $d_k=b_k$. Now, we look at the construction $F_m = F_m^{d_1,\ldots,d_k}$, and pick a $k$-edge subgraph $G\subset F_m$. Observe that $G\in\EIP_k$, and therefore there is a length $k$ vector $\vec{g}$ such that $G = H(\vec{g})$. It suffices to check that $\vec{g}=\vec{b}$.

\medskip

Suppose $G=\{A_1,\ldots,A_k\}$. For $1\le i\le k$, let $a_i:=|A_1\cap\cdots\cap A_i|$. Recall that Lemma \ref{lem:a_to_b} gave us a way of computing $\vec{g}$ in terms of $\vec{a}$, and Claim \ref{clm:a=sum(f)} computes $\vec{a}$ in terms of $\vec{d}$. In order to precisely write down these relations, we introduce a few matrices.

\begin{notn}
	$\qquad$
	\begin{itemize}
		\item Let $a^{(m)}_{ij}=\binom{m-i}{j-i}$ and $b^{(m)}_{ij}=(-1)^{j-i}\binom{m-i}{j-i}$.\footnote{ By our convention, $\binom xy=0$ if  $y<0$. Thus $a^{(m)}_{ij}=b^{(m)}_{ij}=0$ whenever $j<i$.} Then, we denote by $A_{k,m}$ and $B_{k,m}$ the upper triangular matrices
		\[
		A_{k,m}=(a^{(m)}_{ij})_{1\le i,j\le k}, \mbox{ and }B_{k,m}=(b^{(m)}_{ij})_{1\le i,j\le k},
		\]
		\item Let $\vec{\mathbf 1}$ denote the all-one vector, and $\vec{\mathbf 0}$ the all-zero vector.
		\item Define $D:=\begin{bmatrix}
		A_{k-1,m} & \vec{\bf 1} \\
		\vec{\mathbf 0}^\intercal & 1
		\end{bmatrix}$.
		\item Let $W_{k-1,m}$ be the $(k-1)\times (k-1)$ matrix given by $W_{k-1,m}=(w^{(m)}_{ij})_{1\le i,j\le k-1}$, and $w^{(m)}_{ij}=(-1)^{j-i}\binom{m-k+j-i-1}{j-i}$. Define $W' := \begin{bmatrix} W_{k-1,m}& \vec{\mathbf 0}\\ \vec{\mathbf 0}^\intercal & 1\end{bmatrix}$.
	\end{itemize}
\end{notn}
First, we observe that the assertion of Lemma \ref{lem:a_to_b} can be rephrased as,
\begin{equation}
\label{eqn:g=Ba}
\vec{g}=B_{k,k}\vec{a}.
\end{equation}

Next, in terms of matrices, equality (\ref{eqn:a=sum(f)}) reads
\begin{equation}
\label{eqn:a=Df}
\vec{a}=D\vec{d}.
\end{equation}

Finally, by the definition of $\vec{d}$, we have
\begin{equation}
\label{eqn:f=w'b}
\vec{d} = W'\vec{b}.
\end{equation}

Putting together Equations (\ref{eqn:g=Ba},\ref{eqn:a=Df},\ref{eqn:f=w'b}), we obtain:
\[
\vec{g} = B_{k,k}DW'\cdot \vec{b}.
\]

By Proposition \ref{prop:apndx-BDW'=I} from the Appendix, we know that the product matrix $B_{k,k}DW'$ is $I_k$, and this concludes the proof of Lemma \ref{lem:linearmatrixupperbd}.
\hfill{$\blacksquare$}

\medskip

\begin{proof}[Proof of Theorem \ref{thm:generalupperlowerbd}]
	
We now have gathered all the equipment required to complete the proof of Theorem \ref{thm:generalupperlowerbd}. Recall that $\alpha=\min\limits_{1\le i \le k-2}\left(\frac{b_i(m)}{mb_{i+1}(m)}\right)$, and we wish to prove that
\[
f(m,H(\vec{\bf b}))\le \frac{k(k-1)}{\alpha}+k-1.
\]
Note that this bound is trivial if $\frac{k(k-1)}{\alpha}\ge m$, therefore we may assume that $\alpha m> k(k-1)$. From the definition of $\alpha$, note that $b_i\ge \alpha mb_{i+1}$ for each $1\le i\le k-2$. By successively applying these inequalities we obtain $b_i\ge \alpha m b_{i+1}\ge \alpha^2m^2b_{i+2}\ge\cdots\ge \alpha^{k-i-1}m^{k-i-1}b_{k-1}$. Thus,
\begin{equation}
\label{eqn:pfthmlongineq}
b_i\ge \alpha mb_{i+1} \ge \sum_{r=i+1}^{k-1}\frac{\alpha m}{k}\cdot b_{i+1}\ge\sum_{r=i+1}^{k-1}\frac{\alpha^{r-i}m^{r-i}}{k}\cdot b_r\ge \sum_{r=i+1}^{k-1}\left(\frac{\alpha m}{k}\right)^{r-i}b_r\ge \sum_{r=i+1}^{k-1}\binom{\left\lfloor\frac{\alpha m}{k}\right\rfloor}{r-i}b_r.
\end{equation}
The last inequality follows from $X^t\ge \binom {\lfloor X\rfloor}t$. Observe that the assumption $\frac{\alpha m}k> k-1$ implies $\left\lceil\frac{\alpha m}k\right\rceil\ge k$. Therefore, for $1\le i\le k-2$ and $i+1\le r\le k-1$, we have
\[
\left\lfloor\frac{\alpha m}k\right\rfloor\ge\left\lceil\frac{\alpha m}k\right\rceil-k+r-i-1\ge 0.
\]
Thus, (\ref{eqn:pfthmlongineq}) gives us
\[
b_i\ge\sum_{r=i+1}^{k-1} \binom{\left\lfloor\frac{\alpha m}k\right\rfloor}{r-i}b_r\ge \sum_{r=i+1}^{k-1}\binom{\left\lceil\frac{\alpha m}k\right\rceil-k+r-i-1}{r-i}b_r\ge \sum_{r=i+1}^{k-1}(-1)^{r-i+1}\binom{\left\lceil\frac{\alpha m}k\right\rceil-k+r-i-1}{r-i}b_r,
\]
implying
\[
b_i+\sum_{r=i+1}^{k-1}(-1)^{r-i}\binom{\left\lceil\frac{\alpha m}k\right\rceil-k+r-i-1}{r-i}b_r\ge 0.
\]
This is exactly the condition (\ref{eqn:cond_on_bi}), with $m$ replaced by $\left\lceil\frac{\alpha m}{k}\right\rceil$, so Lemma \ref{lem:linearmatrixupperbd} gives us a hypergraph $K$ on $\left\lceil\frac{\alpha m}{k}\right\rceil$ edges such that every $k$ sets of $K$ are isomorphic to $H(\vec{\bf b})$. 

\begin{figure}[H]
\begin{center}
    \begin{tikzpicture}
        \draw (0,0) circle [radius=1]; 
        \draw (2.5,0) circle [radius=1];
        \draw [fill] (4,0) circle [radius=0.05];
        \draw [fill] (4.5,0) circle [radius=0.05];
        \draw [fill] (5,0) circle [radius=0.05];
        \draw (6.5,0) circle [radius=1];
        \draw (0,0) node {$\displaystyle\left\lceil\frac{\alpha m}{k}\right\rceil$};
        \draw (0,1.1) node [above] {$K$};
        \draw (0,-1.1) node [below] {$1$};
        \draw (2.5,0) node {$\displaystyle\left\lceil\frac{\alpha m}{k}\right\rceil$};
        \draw (2.5,1.1) node [above] {$K$};
        \draw (2.5,-1.1) node [below] {$2$};
        \draw (6.5,0) node {$\displaystyle\left\lceil\frac{\alpha m}{k}\right\rceil$};
        \draw (6.5,1.1) node [above] {$K$};
        \draw (6.5,-1.1) node [below] {$\left\lceil{k}/{\alpha}\right\rceil$};
    \end{tikzpicture}
\caption{Constructing $F_m$ from copies of $K$}
\end{center}
\end{figure}
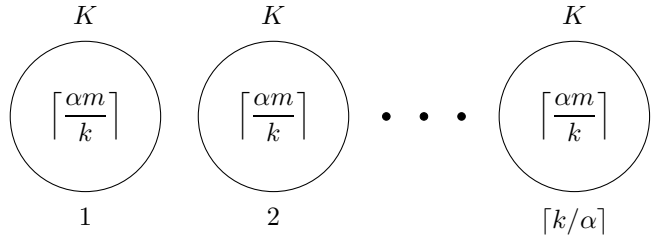

Now, consider a $\left\lceil\frac k\alpha\right\rceil$-fold disjoint union of $K$'s. This hypergraph $F_m$ has $\left\lceil\frac k\alpha\right\rceil\cdot\left\lceil\frac{\alpha m}{k}\right\rceil \ge m$ edges, and note that as long as we pick $1+\left\lceil\frac{k}{\alpha}\right\rceil\cdot(k-1)$ edges, some $k$ of them fall in the same copy of $K$. These $k$ edges create a $H(\vec{\bf b})$ by construction of $K$. This shows $f(m,H(\vec{\bf b}))\le \left\lceil\frac{k}{\alpha}\right\rceil\cdot (k-1)$, completing the proof of the upper bound. 

\bigskip

Now we prove the lower bound. Recall that we are aiming to prove
	\begin{equation}
	\label{eqn:lowerbd1}
	f(m,H(\vec{\bf b}))\ge\max_{1\le i \le k-2}\left({\frac{mb_{i+1}}{2(b_i+b_{i+1})\binom{b_{k-1}+b_k}{b_k}}}\right)^{\frac 1k}.
	\end{equation}
	
	Suppose $F$ is a hypergraph on $m$ edges. Either $F$ has a subgraph $F_1$ of size $\frac m2$ which is of the same uniformity as $H(\vec{b})$, or it has a subgraph of size $\frac m2$ which is not of the this uniformity. If the latter is true, then $\ex(F,H(\vec{b}))\ge \frac m2$. Otherwise, we focus on the subgraph $F_1$. Let $T$ be a $H(\vec b)$-free subgraph in $F_1$ of maximum size, say $|T|=t$. Then, for every $S\in F_1\setminus T$, there exist distinct $A_1,\ldots,A_{k-1}\in T$ such that $\{A_1,\ldots,A_{k-1},S\}$ forms a $H(\vec{b})$. Therefore, there are fixed $A_1,\ldots,A_{k-1}\in T$ such that $\{A_1,\ldots,A_{k-1},S\}$ forms a $H(\vec b)$ for every $S\in F_2$, where
	\[
	|F_2|\ge \frac{\frac m2-t}{\binom{t}{k-1}}.
	\]
	Further, note that $|A_1\cap\cdots\cap A_{k-1}\cap S|=b_{k}$ for every $S\in F_2$, therefore there is a subgraph $F_3\subseteq F_2$ such that every element $S\in F_3$ intersects $A_1\cap \cdots\cap A_{k-1}$ in the exact same set, and
	\[
	|F_3|\ge\frac{\frac m2-t}{\binom t{k-1}\binom{b_{k-1}+b_{k}}{b_k}}.
	\]
	Finally, for any $1\le i\le k-2$, let
	\[
	h_i:=|\{(x,B):\ x\in A_1\cap \cdots\cap A_i\setminus (A_{i+1}\cup \cdots\cup A_{k-1}),\ B\in F_3,\ x\in B\}|.
	\] 
	Let $D:= \max\limits_{x\in V(F_3)}\deg_{F_3}(x)$. As $\{A_1,\ldots,A_{k-1},B\}$ is an $H(\vec{b})$,
	\begin{equation}
	\label{eqn:doublecount1}
	|F_3|\cdot b_{i+1} = h_i\le D \cdot |A_1\cap \cdots\cap A_i\setminus (A_{i+1}\cup \cdots\cup A_{k-1})|.
	\end{equation}
	Now, for a fixed $S\in F_3$, let $X:=A_1\cap \cdots\cap A_i\setminus (A_{i+1}\cup \cdots\cup A_{k-1})$. Note that,
	\[
	\begin{aligned}|X|&=|S\cap X|+|X\setminus S|\\&=|S\cap A_1\cap \cdots\cap A_i\setminus (A_{i+1}\cup \cdots\cup A_{k-1})|+|A_1\cap \cdots\cap A_i\setminus (A_{i+1}\cup \cdots\cup A_{k-1}\cup S)| \\&=b_{i+1}+b_{i},\end{aligned}
	\]
	Therefore (\ref{eqn:doublecount1}) implies
	\[
	D\ge \frac{|F_3|\cdot b_{i+1}}{b_i + b_{i+1}}\ge \frac{(\frac m2-t)b_{i+1}}{\binom{t}{k-1}\binom{b_{k-1}+b_k}{b_k}(b_i+b_{i+1})}.
	\]
	Clearly the sets in $F_3$ that achieve the maximum degree $D$ is $H(\vec{b})$-free, leading to the inequality
	\[
	t\ge \frac{(\frac m2-t)b_{i+1}}{\binom{t}{k-1}(b_i+b_{i+1})\binom{b_{k-1}+b_k}{b_k}},
	\]
	i.e.,
	\[
	t\binom{t}{k-1}\ge \frac{(\frac m2-t)b_{i+1}}{(b_i+b_{i+1})\binom{b_{k-1}+b_k}{b_k}}.
	\]
	Since $m\ge 6$, note that if $t\ge\frac m4$, then $t\ge \left(\frac m2\right)^{\frac 13}\ge \left(\frac m2\right)^{\frac 1k}$, which is larger than the right side of (\ref{eqn:lowerbd1}). So we may assume $t<\frac m4$, which would lead us to
	\begin{equation}
	\label{eqn:lowerbd1proof}
	t^k\ge 2t\binom{t}{k-1}\ge \frac{mb_{i+1}}{2(b_i+b_{i+1})\binom{b_{k-1}+b_k}{b_k}}.
	\end{equation}
	As (\ref{eqn:lowerbd1proof}) holds for every $1\le i\le k-2$, this gives the bound that we seek.
\end{proof}


\section{Proof of Theorem \ref{thm:generallowerbd}}
In this section we prove Theorem \ref{thm:generallowerbd}. The proof is by induction on $b_k$, starting from $b_k=0$. Notice that the lower bound of Theorem \ref{thm:generalupperlowerbd} gives us the following corollary, which serves as the base case for our induction argument:

\begin{cor}
\label{cor:lowerbdb_k=0}
For $m\ge 6$,
\[
f(m,H(b_1,\ldots,b_{k-1},0))\ge \max_{1\le i\le k-2}\left({\frac{mb_{i+1}}{2(b_i+b_{i+1})}}\right)^{\frac 1k}.
\]
\end{cor}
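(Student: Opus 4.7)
The plan is to observe that the stated corollary is precisely the $b_k = 0$ specialization of a bound already derived inside the proof of Theorem \ref{thm:generalupperlowerbd}. First I would point back to equation (\ref{eqn:lowerbd1}) in that proof, which records the finer maximum-form inequality
\[
f(m,H(\vec{\bf b})) \;\ge\; \max_{1 \le i \le k-2} \left(\frac{m\, b_{i+1}}{2(b_i + b_{i+1})\binom{b_{k-1}+b_k}{b_k}}\right)^{1/k},
\]
valid for $m \ge 6$. This is the intermediate step obtained by the maximum-degree double counting on the vertex set $A_1 \cap \cdots \cap A_i \setminus (A_{i+1} \cup \cdots \cup A_{k-1})$; only afterwards does one use $\alpha(\vec{\bf b}) \le (b_i + b_{i+1})/(m\, b_{i+1})$ to collapse the maximum into the single cleaner expression displayed in the theorem.

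Then I would simply substitute $b_k = 0$. The binomial factor becomes $\binom{b_{k-1} + 0}{0} = 1$, and the right-hand side reduces to exactly
\[
\max_{1 \le i \le k-2} \left(\frac{m\, b_{i+1}}{2(b_i + b_{i+1})}\right)^{1/k},
\]
which is the assertion of the corollary. The hypotheses of Theorem \ref{thm:generalupperlowerbd} permit $b_{k-1}, b_k \ge 0$, so the substitution $b_k = 0$ is allowed; nothing in the derivation of (\ref{eqn:lowerbd1}) used positivity of $b_k$, since the double-counting argument takes place on indices $1 \le i \le k-2$ and is oblivious to the innermost parameter.

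There is no genuine obstacle: the entire content of the corollary is the remark that evaluating the theorem's lower bound at $b_k = 0$ erases the binomial factor. The only care needed is to cite the maximum-form intermediate inequality (\ref{eqn:lowerbd1}) rather than the compressed form in the statement of Theorem \ref{thm:generalupperlowerbd}, so that the index $i$ still ranges freely over $\{1, \ldots, k-2\}$ in the conclusion.
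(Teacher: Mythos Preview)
Your proposal is correct and matches the paper's approach exactly: the paper simply states that the corollary follows from the lower bound of Theorem \ref{thm:generalupperlowerbd} by setting $b_k=0$, and you have unpacked precisely that observation via equation (\ref{eqn:lowerbd1}). One minor remark: your closing caveat that one must cite (\ref{eqn:lowerbd1}) rather than the ``compressed'' form in the theorem statement is unnecessary, since the two are identical---$\frac{mb_{i+1}}{2(b_i+b_{i+1})} = \bigl(2(\tfrac{b_i}{mb_{i+1}}+\tfrac1m)\bigr)^{-1}$, so maximizing the former over $i$ is the same as minimizing $b_i/(mb_{i+1})$ and then inverting, which is exactly the theorem's displayed lower bound.
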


Further, one can asymptotically improve this bound when $k=3$:
\begin{prop}
\label{prop:lowerbound_b3=0}
For $m\ge 4$,
\[
f(m,H(b_1,b_2,0))\ge \sqrt{\frac{mb_2}{2(b_1+2b_2)}}.
\]
\end{prop}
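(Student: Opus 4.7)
The plan is to sharpen the lower-bound argument from the proof of Theorem~\ref{thm:generalupperlowerbd} in the special case $k=3$, $b_3=0$, improving the exponent from $1/3$ to $1/2$ by pigeonholing on a single edge of the extremal $H$-free family (rather than a pair), and then on a single vertex of that edge. The crucial structural observation is that when $b_3=0$, any three edges sharing a common vertex $v$ cannot form a copy of $H(b_1,b_2,0)$, since $v$ would then lie in the forbidden triple intersection.

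First, I would reduce to the uniform case: if at least $m/2$ edges of $F$ have size $\neq b_1+2b_2$, those edges form an $H$-free subfamily whose size already beats the target bound (using $b_2/(2(b_1+2b_2))\le 1/4$). Otherwise, let $F_1\subseteq F$ collect the edges of size $b_1+2b_2$, so $|F_1|\ge m/2$, and let $T\subseteq F_1$ be a maximum $H$-free subfamily with $t:=|T|$. If $t\ge m/4$ the bound is immediate for $m\ge 4$ (since $(m/4)^2\ge mb_2/(2(b_1+2b_2))$ whenever $m\ge 4$), so I may assume $t<m/4$.

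By maximality, every $S\in F_1\setminus T$ completes a copy of $H$ with two edges of $T$. Counting incidences $(S,A)$ with $A\in T$ lying in a witness pair for $S$ yields at least $2(|F_1|-t)$, so by pigeonhole some $A\in T$ witnesses a subfamily $F_2\subseteq F_1\setminus T$ of size $|F_2|\ge(m-2t)/t$. For every $S\in F_2$, $|S\cap A|=b_2$, so
\[
\sum_{v\in A}|\{S\in F_2:v\in S\}|=b_2|F_2|,
\]
and some $v\in A$ lies in at least $b_2|F_2|/(b_1+2b_2)$ sets of $F_2$. The family $F_3:=\{A\}\cup\{S\in F_2:v\in S\}$ then has every edge containing $v$, so any three of its edges share $v$, precluding a copy of $H(b_1,b_2,0)$. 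Thus $F_3$ is $H$-free, and maximality of $T$ gives $t\ge|F_3|\ge b_2(m-2t)/(t(b_1+2b_2))$, which together with $t<m/4$ rearranges to $t^2\ge mb_2/(2(b_1+2b_2))$.

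The main obstacle is recognizing that when $b_3=0$ the empty triple-intersection constraint alone certifies $H$-freeness of any family whose edges share a common vertex; this eliminates the $\binom{b_{k-1}+b_k}{b_k}$ pigeonhole factor from the proof of Theorem~\ref{thm:generalupperlowerbd} and lets us pigeonhole on a single vertex (with divisor only $|A|=b_1+2b_2$), saving the factor of $t$ that bumps the exponent from $1/3$ to $1/2$.
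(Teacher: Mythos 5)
Your proof is correct and follows essentially the same approach as the paper: reduce to the $(b_1+2b_2)$-uniform case, take a maximum $H$-free subfamily $T$, use the fact that a star (all edges through one vertex) is $H(b_1,b_2,0)$-free because $b_3=0$, and derive the inequality $t^2(b_1+2b_2)\ge (m-2t)b_2$. The only cosmetic difference is that you pigeonhole in two stages (first on a witness edge $A\in T$, then on a vertex of $A$), whereas the paper double-counts incidences between $F_1\setminus T$ and $V=\bigcup_{A\in T}A$ in one step; both yield the identical final bound.
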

\begin{proof}
Let $|F|=m$ and $H=H(b_1,b_2,0)$. Either $F$ has a $(b_1+2b_2)$-uniform subgraph $F_1$ of size $\frac m2$, or it has a subgraph of size $\frac m2$ in which none of the edges have size $(b_1+2b_2)$. If the latter is true, then $\ex(F,H)\ge\frac m2$. Otherwise let us focus on $F_1$. Let $T$ be an $H$-free subset of maximum size in $F_1$, and suppose $|T|=t$. Note that for any $B\in F_1\setminus T$, there are sets $A_1,A_2\in T$ such that $(B,A_1,A_2)$ is a $H(b_1,b_2,0)$. Suppose $V=\bigcup_{A\in T}A$, then we have $|B\cap V|\ge 2b_2$, and $|V|\le t(b_1+2b_2)$. Let $D = \max\limits_{x\in V}\deg_{F_1}(x)$. Then,
\[
2b_2\cdot |F_1\setminus T|\le |\{(x,B):\ x\in V, B\in F_1\setminus T, x\in B\}|\le D\cdot |V|,
\]
and
\[
D\ge \frac{(m-2t)b_2}{t(b_1+2b_2)}.
\]
Let $x\in V$ have the maximum degree in $F$. Since the subgraph of size $D$ containing $x$ is $H$-free, we obtain
\[
t\ge\frac{(m-2t)b_2}{t(b_1+2b_2)}.
\]
If $t\ge\frac m4$, then $t\ge\frac12\sqrt{m}\ge \sqrt{\frac{mb_2}{2(b_1+2b_2)}}$. So assume $t<\frac m4$, and therefore $t^2\ge \frac{mb_2}{2(b_1+2b_2)}$, as desired.
\end{proof}

Before we prove Theorem \ref{thm:generallowerbd} we require the following lemma from \cite{spencer-turan1972}:
\begin{lemma}
\label{lem:alphaH}
Let $H=(V,E)$ be a $k$-graph on $m$ vertices, and let $\alpha(H)$ denote the independence number of $H$. Then, 
\[
\alpha(H)\ge\frac{k-1}{k}\cdot\left({\frac{m^k}{k|E(H)|}}\right)^{\frac1{k-1}}.
\]
\end{lemma}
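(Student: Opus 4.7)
My proof plan is the classical probabilistic deletion argument due to Spencer.

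The plan is to fix a parameter $p \in [0,1]$ to be chosen later and form a random subset $S \subseteq V$ by including each vertex independently with probability $p$. Let $X = |S|$ and let $Y$ be the number of edges of $H$ entirely contained in $S$. Then by linearity of expectation,
\[
\mathbb{E}[X] = pm, \qquad \mathbb{E}[Y] = p^{\,k}\,|E(H)|,
\]
since each edge has $k$ vertices and survives in $S$ with probability $p^k$. For any realization of $S$, deleting one vertex from each of the $Y$ edges contained in $H[S]$ produces an independent set of size at least $X - Y$. Hence
\[
\alpha(H) \;\ge\; \mathbb{E}[X-Y] \;=\; pm - p^{\,k}|E(H)|.
\]

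Next, I would optimize the right-hand side as a function of $p$. Differentiating $g(p) = pm - p^{\,k}|E(H)|$ and setting $g'(p) = 0$ yields
\[
p^{*} \;=\; \left(\frac{m}{k\,|E(H)|}\right)^{\frac{1}{k-1}}.
\]
Substituting back and simplifying, the $p^{\,k}|E(H)|$ term equals $p^{*}m/k$, so
\[
g(p^{*}) \;=\; p^{*}m\left(1 - \tfrac{1}{k}\right) \;=\; \frac{k-1}{k}\cdot m \cdot \left(\frac{m}{k\,|E(H)|}\right)^{\frac{1}{k-1}} \;=\; \frac{k-1}{k}\left(\frac{m^{k}}{k\,|E(H)|}\right)^{\frac{1}{k-1}},
\]
which is the desired bound.

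The one point that requires a brief comment is the constraint $p^{*} \in [0,1]$, i.e., the case $k|E(H)| \ge m$. When $k|E(H)| < m$ (the sparse regime) one instead takes $p = 1$ and gets $\alpha(H) \ge m - |E(H)| \ge m(1 - 1/k)$, which comfortably exceeds the stated bound; so the lemma holds throughout. The probabilistic step is entirely routine, and the only mild obstacle is verifying the optimization algebra and confirming that the sparse boundary case does not worsen the bound; neither presents a real difficulty.
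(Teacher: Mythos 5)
The paper offers no proof of this lemma --- it is quoted directly from Spencer--Tur\'an \cite{spencer-turan1972} --- and your probabilistic deletion argument is precisely the standard proof of that result. The optimization is correct: with $p^{*}=\left(m/(k|E(H)|)\right)^{1/(k-1)}$ one indeed has $p^{*k}|E(H)|=p^{*}m/k$, and $g(p^{*})$ simplifies to the stated bound.

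The boundary case, however, is handled incorrectly, and the error runs in the opposite direction from what you assert. If $k|E(H)|<m$, then $m^{k}/(k|E(H)|)>m^{k-1}$, so the right-hand side of the lemma exceeds $\frac{k-1}{k}\,m$; your fallback bound $\alpha(H)\ge m-|E(H)|\ge m(1-1/k)$ therefore falls \emph{short} of the stated bound rather than ``comfortably exceeding'' it. No patch can close this, because the lemma as written is false in that regime: a $2$-graph on $m$ vertices with a single edge has $\alpha(H)=m-1$, while the right-hand side equals $m^{2}/4$. The correct statement of Spencer's theorem carries the hypothesis $k|E(H)|\ge m$ (equivalently, $p^{*}\le 1$), and you should state it; that hypothesis is harmless where the lemma is applied in the paper, since there one may assume $t=\alpha(G)$ is smaller than the target bound, which forces $G$ to be dense (indeed $|E(G)|\ge n-\alpha(G)$ for an $n$-vertex hypergraph), i.e., exactly the regime in which your argument is complete.
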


Now we are prepared to prove Theorem \ref{thm:generallowerbd}.

\medskip
\begin{proof}[Proof of Theorem \ref{thm:generallowerbd}]
	
Fix $k$ and $\vec{\bf b}$. Recall that $b_k$ is fixed, and we wish to show that for $m\ge 6$,
\begin{equation}
\label{eqn:lowerbdseeked}
f(m,H(b_1,\ldots, b_k))\ge
\left\{ 
	\begin{array}{lc}
	m^{\frac 1{k(b_k+1)}}\left(\frac{b_{k-1}}{4(b_{k-2}+2b_{k-1})}\right)^{\frac 1k}, & k\ge 4,\\
	m^{\frac1{b_3+2}}\left(\frac{b_2}{4(b_1+2b_2)}\right)^{\frac{b_3+1}{b_3+2}}, & k=3.
	\end{array}
\right.
\end{equation}

Suppose $|F|=m$. Then, either $F$ has a subgraph $F_1$ of size at least $\frac m2$ which has uniformity the same as that of $H(\vec{b})$, or it does not. When the latter is true, we have $\ex(F,H(\vec{b}))\ge\frac m2$. Since $\frac m2\ge m^{\frac 14}\cdot \left(\frac 18\right)^{\frac 14}$ and $\frac m2\ge m^{\frac 12}\cdot (\frac18)^{\frac 12}$, we may assume that the former is true. We wish to show that $F_1$ contains a $H(\vec{b})$-free subgraph of large size.

\smallskip
We proceed by induction on $b_k$. Notice that we already established the results for $b_k=0$ in Corollary \ref{cor:lowerbdb_k=0} (using $b_{k-1}\le 2b_{k-1}$) and Proposition \ref{prop:lowerbound_b3=0}.

Construct a $k$-graph $G$ with vertex set $F_1$ and call $\{A_1,\ldots,A_k\}$ an edge in $G$ iff $\{A_1,\ldots,A_k\}\cong H(\vec{b})$. Clearly, $t=\alpha(G)$ is a lower bound to our problem. By Lemma \ref{lem:alphaH},
\[
k|E(G)|\ge \left(\frac{k-1}{k}\right)^{k-1}\cdot \frac{(m/2)^k}{t^{k-1}}.
\]
Given $1\le i\le k$ and $B_1,\ldots, B_i\in F_1$, denote by $\deg_G(B_1,\ldots,B_i)$ the number of edges of $G$ containing $\{B_1,\ldots,B_i\}$. As $\sum_{A_1,\ldots,A_{k-2}\in F_1}\deg_G(A_1,\ldots,A_{k-2})=\binom k2|E(G)|$, we have
\[
\sum_{A_1,\ldots,A_{k-2}\in F_1}\deg_G(A_1,\ldots,A_{k-2})\ge \frac{\binom k2}{k}\cdot \frac{(k-1)^{k-1}}{k^{k-1}}\cdot \frac{(m/2)^k}{t^{k-1}}=\frac{(k-1)^{k}}{2k^{k-1}}\cdot \frac{(m/2)^k}{t^{k-1}}.
\]
The sum on the left side has at most $\binom {m/2}{k-2}\le \frac{(m/2)^{k-2}}{(k-2)!}$ terms, therefore there exist distinct $A_1,\ldots,A_{k-2}\in F_1$ such that
\[
\deg_G(A_1,\ldots,A_{k-2})\ge \frac{(k-2)!(k-1)^{k}}{2k^{k-1}}\cdot \frac{(m/2)^2}{t^{k-1}}.
\]
Note that $\frac{(k-2)!(k-1)^{k}}{2k^{k-1}}>\frac14$ for every $k\ge 3$. Let $\B$ denote the set of all edges $B\in F_1$ which are covered by an edge through $\{A_1,\ldots,A_{k-2}\}$. Then, $|\B|^2\ge \deg_G(A_1,\ldots,A_{k-2})$, and so
\begin{equation}
\label{eqn:lowerbdB}
|\B|^2\ge \frac14\cdot \frac{(m/2)^2}{t^{k-1}}=\frac1{16}\cdot\frac{m^2}{t^{k-1}}.
\end{equation}
As $\{A_1,\ldots,A_{k-2}\}$ is a subgraph of $H(\vec b)$, we have
\[
|A_1\cap\cdots\cap A_{k-2}|=b_{k-2}+2b_{k-1}+b_k.
\]
Also, for every $B\in\B$, $\{A_1,\ldots,A_{k-2},B\}$ is a subgraph of $H(\vec b)$. Thus,
\[
|A_1\cap\cdots\cap A_{k-2}\cap B|=b_{k-1}+b_k.
\]
Now,
\[
|\B|\cdot(b_{k-1}+b_k) = |\{(x,B):\ x\in A_1\cap\cdots\cap A_{k-1}, B\in\B, x\in B\}| = \sum_{x\in A_1\cap\cdots\cap A_{k-2}} \deg_\B(x).
\]
Let $D$ be the maximum degree of a vertex in $F_1$. Then, by (\ref{eqn:lowerbdB}),
\begin{equation}
\label{eqn:lowerbdD}
D\cdot (b_{k-2}+2b_{k-1}+b_k)\ge |\B|\cdot (b_{k-1}+b_k)\ge \frac{1}{4}(b_{k-1}+b_k)\cdot\frac{m}{t^{\frac{k-1}{2}}}.
\end{equation}
Also, note that 
\[
\frac{b_{k-1}+b_k}{b_{k-2}+2b_{k-1}+b_k}\ge \frac{b_{k-1}}{b_{k-2}+2b_{k-1}}\iff b_k(b_{k-2}+b_{k-1})\ge 0.
\]
Therefore (\ref{eqn:lowerbdD}) gives us,
\begin{equation}
\label{eqn:lowerbdDmajor}
D\ge \frac{1}{4}\cdot \frac{b_{k-1}}{b_{k-2}+2b_{k-1}}\cdot \frac{m}{t^{\frac{k-1}{2}}}.
\end{equation}
Now, we notice that if $x$ is a vertex of degree $D$, then deleting it from the edges through $x$ gives us a family of uniformity one less than that of $F_1$. By induction on $b_k$, this subfamily already contains a $H(b_1,\ldots,b_{k-1},b_k-1)$-free family of size $f(D,H(b_1,\ldots,b_{k-1},b_k-1))$, which is a natural lower bound to our problem. Therefore,
\[
t\ge f(D,H(b_1,\ldots,b_{k-1},b_k-1))
\]
We now split into two cases.
\begin{itemize}
\item {\bf Case I: $k\ge 4$.} Now we use the inductive lower bound given by (\ref{eqn:lowerbdseeked}):
\[
t\ge D^{\frac 1{kb_k}}\left(\frac{b_{k-1}}{4(b_{k-2}+2b_{k-1})}\right)^{\frac 1k}\iff D\le \left(\frac{4(b_{k-2}+2b_{k-1})}{b_{k-1}}\right)^{b_k}\cdot t^{kb_k}.
\]
Combining this bound with (\ref{eqn:lowerbdDmajor}), we get
\[
\left(\frac{4(b_{k-2}+2b_{k-1})}{b_{k-1}}\right)^{b_k}\cdot t^{kb_k}\ge \frac{b_{k-1}}{4(b_{k-2}+2b_{k-1})}\cdot \frac{m}{t^{\frac{k-1}{2}}},
\]
Which, on invoking ${t^{\frac{k-1}{2}}}\le {t^k}$, leads us to
\[
t^{k(b_k+1)}\ge m\left(\frac{b_{k-1}}{4(b_{k-2}+2b_{k-1})}\right)^{b_k+1},
\]
finishing off the induction step.
\item {\bf Case II: $k=3$.} In this case we use the inductive lower bound in (\ref{eqn:lowerbdseeked}) of
\[
t\ge D^{\frac1{b_3+1}}\left(\frac{b_2}{4(b_1+2b_2)}\right)^{\frac{b_3}{b_3+1}}\iff D\le \left(\frac{4(b_1+2b_2)}{b_2}\right)^{b_3}\cdot t^{b_3+1}.
\]
Again, combining this bound with (\ref{eqn:lowerbdDmajor}), we obtain
\[
\left(\frac{4(b_1+2b_2)}{b_2}\right)^{b_3}\cdot t^{b_3+1}\ge \frac{b_{2}}{4(b_{1}+2b_{2})}\cdot \frac{m}{t}.
\]
This implies $t\ge m^{\frac1{b_3+2}}\left(\frac{b_2}{4(b_1+2b_2)}\right)^{\frac{b_3+1}{b_3+2}}$, completing the induction step.

\end{itemize}
\end{proof}


\section{Proof of Theorem \ref{thm:inversiveplane}}
In this section, we prove Theorem \ref{thm:inversiveplane}. For the proof, we rely upon the incidence structure of Miquelian inversive planes $\mathbf M(q)$ of order $q$. An inversive plane consists of a set of points $\mathcal P$ and a set of circles $\mathcal C$ satisfying three axioms \cite{dembowski-hughes-inversive1965}:
\begin{itemize}
	\item Any three distinct points are contained in exactly one circle.
	\item If $P\neq Q$ are points and $c$ is a circle containing $P$ but not $Q$, then there is a unique circle $b$ through $P,Q$ and satisfying $b\cap c=\{P\}$.
	\item $\mathcal P$ contains at least four points not on the same circle.
\end{itemize}
Every inversive plane is a $3$-$(n^2+1,n+1,1)$-design for some integer $n$, which is called its order. An inversive plane is called Miquelian if it satisfies Miquel's theorem \cite{dembowski-hughes-inversive1965}. The usefulness of Miquelian inversive planes lies in the fact that their automorphism groups are sharply 3-transitive (cf. pp 274-275, Section 6.4 of \cite{dembowski-book2012}). There are a few known constructions of $\mathbf M(q)$, one such construction is outlined here. The points of $\mathbf M(q)$ are elements of $\mathbb F_q^2$ and a special point at infinity, denoted by $\infty$. The circles are the images of the set $K=\mathbb F_q\cup\{\infty\}$ under the permutation group $PGL_2(q^2)$, given by
\[
x\mapsto \frac{ax^\alpha+c}{bx^\alpha+d},\ ad-bc\neq 0, \alpha\in\Aut(\mathbb F_q^2).
\]
For further information on inversive planes and their constructions, the reader is referred to \cite{dembowski-book2012,biliotti-montinaro2012,rinaldi-inversive2001}.

Now, we prove Theorem \ref{thm:inversiveplane}.
\begin{proof}[Proof of Theorem \ref{thm:inversiveplane}.]
Recall that for every odd prime power $q$, we are required to demonstrate a hypergraph on $q^2+1$ edges with the property that every three edges form an $H(q^2-q-1,q,1)$. Let $\mathbf M(q)$ be a Miquelian inversive plane, with points labelled $\{1,2,\ldots,q^2+1\}$. Then, we consider the $(q^2+q)$-graph $F=\{A_1,\ldots,A_{q^2+1}\}$, whose vertex set $V(F)$ is the circles of $\mathbf M(q)$, and $A_i$ is the collection of circles containing $i$. By the inversive plane axiom, any three distinct points have a unique circle through them. It suffices to show that any two distinct points $P,Q$ in $\mathbf M(q)$ have $q+1$ distinct circles through them. By 2-transitivity of the Automorphism group, we know that any two points have the same number $a_2$ of circles through them. Now, for any $P\neq Q$,
	\[
	(q^2+1-2)\cdot 1 =|\{(R,c): R\mbox{ is a point}, c\mbox{ is a circle through }P,Q,R\}|=a_2\cdot (q+1-2),
	\]
	Thus $a_2=q+1.$ So, $F$ is $(q^2+q)$-uniform, every two edges of $F$ have an intersection of size $q+1$, and every three edges of $F$ have an intersection of size $1$. By inclusion-exclusion, they form a $H(q^2-q-1,q,1)$.
\end{proof}

Now, we prove Corollary \ref{cor:extendinversive}.

\begin{proof}[Proof of Corollary \ref{cor:extendinversive}]
Recall that we wish to extend the result of the Theorem \ref{thm:inversiveplane}. We wish to show that when $b_1\ge b_2^{\,2}-b_2-1$ and $b_2^{\,2}\ge m$, $f(m,H(b_1,b_2,1))=2$.

Notice that the inversive plane construction asymptotically gives us $b_2^{\,2}+1$ sets such that any three of them are an isomorphic copy of $H(b_2^{\,2}-b_2-1,b_2,1)$. As long as $b_2^{\,2}+1\ge m$, we can take a subgraph of the construction and still obtain $m$ sets satisfying the same property. Also note that as long as $b_1\ge b_2^{\,2}-b_2-1$, we can create a construction by first creating an inversive plane $F$ satisfying $b_1=b_2^{\,2}-b_2-1$, and then adding $(b_1-b_2^{\,2}+b_2+1)$ new distinct points to each set in $F$.
\end{proof}


\section{Further Problems} \label{problems}
We discuss a few further problems that are interesting. Of course, the main open question is (\ref{eqn:mainqn2}), which asks to characterize all sequences of $k$-edge hypergraphs $H_m$ for which $f(m,H_m)$ is bounded. As we discussed, even the case $k=3$ turns out to be interesting. 

Let us focus on the case $k=3$ and $\vec{\bf b}=(b_1,b_2,1)$. The current state of affairs was summarized in Figure \ref{fig:finalpic}. An interesting observation is that all the upper bounds in the light regions are actually upper bounds of $2$. Therefore, the following question may be interesting to ask:

\begin{prob}
\label{prob:prob1}
	Characterize all values of $(b_1,b_2)$ such that $f(m,H_3(b_1,b_2,1))=2$.
\end{prob}

We cannot solve this problem completely. However, a straightforward counting argument estimating $\sum d_i$, $\sum d_i^2$ and $\sum d_i^3$, along with Cauchy-Schwarz yields the following necessary condition.
\begin{thm}
\label{thm:necessary_condition_for_f=2}
	Suppose $f(m,H_3(b_1,b_2,b_3))=2$. Then, for large enough $m$, $b_1,b_2,b_3$ must satisfy
	\[
	b_1b_3+\frac{b_1b_2}{m}+\frac{b_2b_3}{m}\ge b_2^{\,2}.
	\]
	In particular, when $b_3=1$, these numbers satisfy
	\[
	b_1+\frac{b_1b_2}m\ge b_2^{\,2}.
	\]
\end{thm}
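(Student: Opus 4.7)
The plan is to exploit the rigid structure forced by $f(m,H_3(b_1,b_2,b_3))=2$: this hypothesis means there must exist an $m$-edge hypergraph $F=\{A_1,\ldots,A_m\}$ in which every triple of edges is an isomorphic copy of $H_3(b_1,b_2,b_3)$. Reading off the Venn diagram, for any distinct $i,j,k\in[m]$ we have $|A_i|=b_1+2b_2+b_3$, $|A_i\cap A_j|=b_2+b_3$, and $|A_i\cap A_j\cap A_k|=b_3$.

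Let $d_v$ denote the degree of $v\in V(F)$ and set $S_j=\sum_v d_v^{\,j}$. Double-counting incidences, incident pairs, and incident triples of vertex-edge memberships gives
\begin{align*}
S_1 &= m(b_1+2b_2+b_3),\\
S_2-S_1 &= m(m-1)(b_2+b_3),\\
S_3-3S_2+2S_1 &= m(m-1)(m-2)\,b_3.
\end{align*}
Applying Cauchy-Schwarz with $u_v=d_v^{1/2}$ and $w_v=d_v^{3/2}$ yields $S_2^{\,2}\le S_1 S_3$, which after substituting the expression for $S_3$ rearranges to $(S_2-S_1)(S_2-2S_1)\le S_1\cdot m(m-1)(m-2)\,b_3$.

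Using the identities above together with the computation $S_2-2S_1=m\bigl[(m-3)b_2+(m-2)b_3-b_1\bigr]$, and dividing through by $m^2(m-1)$, expansion produces a polynomial inequality in $b_1,b_2,b_3$. The $(m-2)b_3^{\,2}$ terms cancel exactly, and the $b_2b_3$ coefficients collapse, leaving the clean inequality $(m-3)\,b_2^{\,2}\le(m-1)\,b_1b_3+b_1b_2+b_2b_3$. Dividing by $m$ and treating the resulting $O(1/m)$ slack terms as negligible for sufficiently large $m$ yields the stated $b_1b_3+b_1b_2/m+b_2b_3/m\ge b_2^{\,2}$; the $b_3=1$ specialization then gives the second inequality directly.

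The main obstacle is purely algebraic bookkeeping: after the Cauchy-Schwarz step the resulting expression is a degree-three polynomial in $(b_1,b_2,b_3)$ with coefficients that are polynomials in $m$, and one must track the several $b_2b_3$ and $b_3^{\,2}$ terms arising from expanding $(b_2+b_3)[(m-3)b_2+(m-2)b_3-b_1]$ to see them cancel with the contributions of $(m-2)b_3(b_1+2b_2+b_3)$ on the right. Conceptually, no new combinatorial idea is needed beyond the three moment identities and one application of Cauchy-Schwarz.
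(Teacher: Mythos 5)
Your proof is correct and is precisely the argument the paper intends: the paper only sketches this theorem as ``a straightforward counting argument estimating $\sum d_i$, $\sum d_i^2$ and $\sum d_i^3$, along with Cauchy--Schwarz,'' and your three moment identities plus $S_2^{\,2}\le S_1S_3$ fill in exactly that sketch, with the exact inequality $(m-3)b_2^{\,2}\le(m-1)b_1b_3+b_1b_2+b_2b_3$ yielding the stated bound up to the $1+O(1/m)$ factors that the theorem's ``for large enough $m$'' phrasing already absorbs.
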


Theorem \ref{thm:necessary_condition_for_f=2} gives more insight into Figure \ref{fig:finalpic}. Basically, there are two cases to consider. When $b_1$ is asymptotically larger than $\frac{b_1b_2}{m}$, ie when $b_2=o(m)$, this means that $b_1\ge b_2^{\,2}$ is necessary for $f=2$. When $b_2\ge m$, this gives us $b_1\ge mb_2$, which is exactly the construction in Lemma \ref{lem:linearmatrixupperbd}. Further, note that this transition occurs exactly at the intersection of the line $b_1=mb_2$ and the parabola $b_1=b_2^{\,2}$.
\medskip

As a further special case of Problem \ref{prob:prob1}, one can look at $\vec{\bf b} = (m,b_2,1)$ where $1\ll b_2\ll \sqrt m$. We expect this range to be solvable via a construction, since there are constructions for $b_2=1$ (Theorem \ref{thm:generalupperlowerbd}) and $b_2=\sqrt m$ (Theorem \ref{thm:inversiveplane}). The problem is equivalent to constructing bipartite graphs with certain properties, as stated below.

\begin{prob}
\label{prob:prob2}
	Suppose $1\ll b_2\ll\sqrt m$. Is there a bipartite graph $G$ with parts $A$, $B$, such that $|A|=m$, the degree of every vertex in $A$ is asymptotic to $m$, the size of the common neighborhood of every pair in $A$ is asymptotic to $b_2$, and every three vertices in $A$ have a unique common neighbor in $B$?
\end{prob}

If such a bipartite graph can be constructed, then we can let $F = \{N_G(u): u\in A\}$. This hypergraph will testify for $f(m,H(m,b_2,1))=2$. From the proof of Theorem \ref{thm:necessary_condition_for_f=2}, we know that if such a bipartite graph exists, it cannot be regular from $B$. A regular construction from $B$ implies equality in the Cauchy-Schwarz inequality, which would imply $b_2=\Theta(\sqrt m)$. Therefore if such a graph is constructed, $B$ needs to have vertices of different degrees.

\smallskip
Notice also, that if the answer to Problem \ref{prob:prob2} is affirmative, then we can shade the small triangle in Figure \ref{fig:finalpic} light. This is courtesy the fact that given any $(b_1,b_2)$ in that region, we can write it as a sum of $(x,y)+(m,z)$, with $x\ge my$. We can then take a $H_3(x,y,0)$-construction $\{A_1,\ldots, A_m\}$ and a $H_3(m,z,1)$-construction $\{A'_1,\ldots,A'_m\}$, and merge them together to obtain the $H_3(b_1,b_2,1)$-construction $\{A_1\cup A'_1,\ldots, A_m\cup A'_m\}$.


\section{Appendix}
Our goal in this section is to prove the matrix identity asserted in Proposition \ref{prop:apndx-BDW'=I}. Before seeing the proof, we mention an useful binomial identity in Lemma \ref{lem:vdm_convolution}.

\begin{lemma}
	\label{lem:vdm_convolution}
	For integers $x\ge 0, y\ge z\ge 0$, we have
	\begin{equation}
	\label{eqn:vdm_convolution}
	\sum_{t=0}^z(-1)^t\binom{x}{t}\binom{y-t}{z-t}=(-1)^z\binom{x-y+z-1}{z}.
	\end{equation}
\end{lemma}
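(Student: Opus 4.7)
The plan is to reduce the identity to the standard Vandermonde convolution via upper negation of binomial coefficients. First, I would apply the upper negation identity $\binom{n}{k}=(-1)^k\binom{k-n-1}{k}$ (valid for integers $k\ge 0$) to the second factor, rewriting
\[
\binom{y-t}{z-t}=(-1)^{z-t}\binom{(z-t)-(y-t)-1}{z-t}=(-1)^{z-t}\binom{z-y-1}{z-t}.
\]
This is the key move: it pulls the summation index $t$ out of the ``shifted'' position, leaving a clean binomial in which the upper index $z-y-1$ no longer depends on $t$.

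Next I would substitute this back into the left-hand side of \eqref{eqn:vdm_convolution}. The signs collapse: $(-1)^t\cdot(-1)^{z-t}=(-1)^z$, so
\[
\sum_{t=0}^z(-1)^t\binom{x}{t}\binom{y-t}{z-t}=(-1)^z\sum_{t=0}^z\binom{x}{t}\binom{z-y-1}{z-t}.
\]
Now I would finish by invoking the Vandermonde convolution $\sum_{t=0}^z\binom{a}{t}\binom{b}{z-t}=\binom{a+b}{z}$ with $a=x$ and $b=z-y-1$ (this identity holds as a polynomial identity in the upper indices, so the possibly negative value of $b$ is not an issue), obtaining
\[
(-1)^z\binom{x+z-y-1}{z}=(-1)^z\binom{x-y+z-1}{z},
\]
which is exactly the right-hand side.

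There is no real obstacle here; the only thing to check is that the hypotheses $x\ge 0$ and $y\ge z\ge 0$ ensure all binomials in the starting sum are well-defined in the usual integer sense, while Vandermonde's identity can be applied in the polynomial sense so that the appearance of the negative upper index $z-y-1$ after upper negation causes no trouble.
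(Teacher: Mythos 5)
Your proof is correct, but it takes a genuinely different route from the paper. The paper proves the identity by induction on $y$: the base case $y=z$ is handled by applying Pascal's identity to $\binom{x}{t}$ and telescoping, and the inductive step splits $\binom{y-t+1}{z-t}$ via Pascal's identity into the $y$-case plus a $(z-1)$-case. You instead apply upper negation $\binom{y-t}{z-t}=(-1)^{z-t}\binom{z-y-1}{z-t}$ to decouple the summation index, so that the signs collapse and the sum becomes a literal instance of the Vandermonde convolution $\sum_{t=0}^z\binom{x}{t}\binom{z-y-1}{z-t}=\binom{x+z-y-1}{z}$. This is consistent with the paper's own convention $\binom{-a}{s}=(-1)^s\binom{a+s-1}{s}$ (stated right after the lemma), and your invocation of Vandermonde with a negative upper index is legitimate since for fixed $z\ge 0$ both sides are polynomials in the upper arguments agreeing on all nonnegative integers. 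Your argument is shorter and more conceptual --- it explains why the lemma deserves the name ``Vandermonde convolution'' --- whereas the paper's induction is entirely self-contained, needing only Pascal's identity and no appeal to the polynomial extension of Vandermonde.
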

\begin{proof}
	One can prove this identity using induction on $y$. Note that when $y=z$, the identity becomes
	\[
	\sum_{t=0}^z(-1)^t\binom xt = (-1)^z\binom{x-1}{z},
	\]
	which follows from applying Pascal's identity $\binom{x}{t}=\binom{x-1}{t}+\binom{x-1}{t-1}$ to each term and telescoping.
	
	Now suppose that (\ref{eqn:vdm_convolution}) holds for some $y$. Then,
	\[
	\sum_{t=0}^z(-1)^t\binom{x}{t}\binom{y-t+1}{z-t} = \sum_{t=0}^z(-1)^t\binom{x}{t}\binom{y-t}{z-t} + \sum_{t=0}^{z-1}(-1)^t\binom{x}{t}\binom{y-t}{z-t-1}.
	\]
	By induction hypothesis, the first term is $(-1)^z\binom{x-y+z-1}{z}$ and the second term is $(-1)^{z-1}\binom{x-y+z-2}{z-1}$. Their sum is $(-1)^{z}\binom{x-y+z-2}{z}$, as desired.
\end{proof}

Notice that in some cases $x-y+z-1$ could be negative, whence we interpret the binomial coefficient $\binom {-a}s$ as $(-1)^s\binom{a+s-1}{s}$. Also observe that the generalized binomial coefficients satisfy Pascal's identity $\binom{a}{s}=\binom{a-1}{s}+\binom{a-1}{s-1}$.

We are now going to state and prove Proposition \ref{prop:apndx-BDW'=I}. Recall the following notation: $a^{(m)}_{ij}=\binom{m-i}{j-i}$, $b^{(m)}_{ij}=(-1)^{j-i}\binom{m-i}{j-i}$, $w^{(m)}_{ij}=(-1)^{j-i}\binom{m-k+j-i-1}{j-i}$,
\[
A_{k,m}=(a^{(m)}_{ij})_{1\le i,j\le k}, \ B_{k,m}=(b^{(m)}_{ij})_{1\le i,j\le k}, W_{k-1,m}=(w^{(m)}_{ij})_{1\le i,j\le k-1},
\]
and,
\[D = \begin{bmatrix} A_{k-1,m} & \vec{\bf 1} \\ \vec{\mathbf 0}^\intercal & 1 \end{bmatrix},\ 
W'=\begin{bmatrix} W_{k-1,m}& \vec{\mathbf 0}\\ \vec{\mathbf 0}^\intercal & 1\end{bmatrix}.
\]

\begin{prop}
\label{prop:apndx-BDW'=I}
\[
B_{k,k}\cdot D\cdot W' = I_k.
\]
\end{prop}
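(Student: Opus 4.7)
The plan is to decompose $B_{k,k}$, $D$ and $W'$ into $(k-1)\mid 1$ blocks aligned with the split of the row/column indices, compute $B_{k,k}\cdot D$ first, and show that the answer is block-diagonal with top block equal to $W_{k-1,m}^{-1}$. Multiplying by $W'$ on the right will then produce $I_k$ immediately. Writing $B_{k,k} = \begin{bmatrix}\tilde B & \vec c \\ \vec{\mathbf 0}^\intercal & 1\end{bmatrix}$, where $\tilde B$ is the top-left $(k-1)\times(k-1)$ block with entries $(-1)^{j-i}\binom{k-i}{j-i}$ and $c_i = (-1)^{k-i}$, the product takes the form
\[
B_{k,k}\cdot D = \begin{bmatrix} \tilde B\cdot A_{k-1,m} & \tilde B\,\vec{\mathbf 1}+\vec c \\ \vec{\mathbf 0}^\intercal & 1\end{bmatrix},
\]
so there are really two computations to perform: the top-left block $M := \tilde B\cdot A_{k-1,m}$, and the last column $\tilde B\,\vec{\mathbf 1}+\vec c$.

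For the last column, the $i$-th entry of $\tilde B\,\vec{\mathbf 1}$ equals $\sum_{t=0}^{k-1-i}(-1)^t\binom{k-i}{t}$, which evaluates to $-(-1)^{k-i}$ by removing the top term from $\sum_{t=0}^{k-i}(-1)^t\binom{k-i}{t} = (1-1)^{k-i} = 0$; this is precisely $-c_i$, so $\tilde B\,\vec{\mathbf 1}+\vec c = \vec{\mathbf 0}$. Consequently $B_{k,k}\cdot D$ is block-diagonal, and the proposition reduces to verifying the single identity $M\cdot W_{k-1,m} = I_{k-1}$.

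To compute $M$, I will apply Lemma \ref{lem:vdm_convolution} with $x=k-i$, $y=m-i$, $z=j-i$ to the sum $M_{ij} = \sum_{t=0}^{j-i}(-1)^t\binom{k-i}{t}\binom{m-i-t}{j-i-t}$ arising after the substitution $t=\ell-i$; this yields $M_{ij} = (-1)^{j-i}\binom{k-m+j-i-1}{j-i}$. To then verify $M\cdot W_{k-1,m} = I_{k-1}$, I set $n=j-i$, $u=m-k$ and use the elementary polynomial identity $\binom{s-u-1}{s} = (-1)^s\binom{u}{s}$, so that the $(i,j)$-entry of the product becomes $(-1)^{j-i}(-1)^n$ times $\sum_{s=0}^n(-1)^s\binom{u}{s}\binom{u+n-1-s}{n-s}$. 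A second application of Lemma \ref{lem:vdm_convolution}, now with $x=u$, $y=u+n-1$, $z=n$, collapses this sum to $(-1)^n\binom{0}{n} = \delta_{n,0}$, which is exactly the $(i,j)$-entry of $I_{k-1}$.

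The whole argument is essentially two back-to-back applications of the Vandermonde-type convolution in Lemma \ref{lem:vdm_convolution}. The only mildly delicate step is the rewriting of $\binom{s-u-1}{s}$ as $(-1)^s\binom{u}{s}$, which I will justify as a formal polynomial identity in $u$ and hence valid regardless of the sign of $m-k$. I expect the main obstacle to be purely bookkeeping: keeping careful track of the several index substitutions ($t=\ell-i$, $s=\ell-i$, $n=j-i$, $u=m-k$) and of the negative upper indices produced by Lemma \ref{lem:vdm_convolution} when $m<k$.
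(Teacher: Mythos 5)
Your proposal is correct and follows essentially the same route as the paper's proof: the same block decomposition of $B_{k,k}$, the same alternating-sum computation showing the last column vanishes, and the same two back-to-back applications of Lemma \ref{lem:vdm_convolution} (with identical parameter choices) to reduce $\tilde B\,A_{k-1,m}\,W_{k-1,m}$ to $\binom{0}{j-i}$. The only cosmetic difference is that you evaluate the product $\tilde B\,A_{k-1,m}$ explicitly as an intermediate matrix rather than manipulating the full double sum, which does not change the argument.
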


\begin{proof}
	Note that $B_{k,k}=\begin{bmatrix}B_{k-1,k}&\vec v\\\vec{\bf 0}^\intercal& 1\end{bmatrix}$, where $v_i=(-1)^{k-i}$, and therefore
	\[
	B_{k,k}DW' = \begin{bmatrix}
	B_{k-1,k}A_{k-1,m}W_{k-1,m} & B_{k-1,k}\vec{\bf 1}+v\\
	\vec{\bf 0}^\intercal & 1
	\end{bmatrix}.
	\]
	We verify that $B_{k-1,k}\vec{\bf 1}+v=\vec{\bf 0}$ and $B_{k-1,k}A_{k-1,m}W_{k-1,m}=I_{k-1}$ in Claims \ref{clm:B1+v=0} and \ref{clm:BAW=I}, respectively.
	\begin{claim}
	\label{clm:B1+v=0}
		$B_{k-1,k}\vec{\bf 1}+v = \vec{\bf 0}.$
	\end{claim}
	\begin{proof}[Proof of Claim \ref{clm:B1+v=0}]
		Note that the $i$'th row of $B_{k-1,k}\vec{\bf 1}$ is
		\[
		\begin{aligned}
		\sum_{j=1}^{k-1}b_{ij}^{(m)} = \sum_{j=i}^{k-1}(-1)^{j-i}\binom{k-i}{j-i}= \sum_{j=0}^{k-i-1}(-1)^{j}\binom{k-i}{j}=0 - (-1)^{k-i} = -v_i,
		\end{aligned}
		\]
		as desired.
	\end{proof}
	\begin{claim}
	\label{clm:BAW=I}
		$B_{k-1,k}A_{k-1,m}W_{k-1,m}=I_{k-1}$.
	\end{claim}
	\begin{proof}[Proof of Claim \ref{clm:BAW=I}]
		Note that the $(i,j)$th entry of the product matrix is given by
		\begin{equation}
		\label{eqn:propeq1}
		\begin{aligned}
		\sum_{r=1}^{k-1}\sum_{s=1}^{k-1}b^{(k)}_{ir}a^{(m)}_{rs}w^{(m)}_{sj}&=\sum_{r=1}^{k-1}\sum_{s=1}^{k-1}(-1)^{r-i+j-s}\binom{k-i}{r-i}\binom{m-r}{s-r}\binom{m-k+j-s-1}{j-s}\\
		&=\sum_{s=1}^{k-1}(-1)^{j-s}\binom{m-k+j-s-1}{j-s}\sum_{r=1}^{k-1}(-1)^{r-i}\binom{k-i}{r-i}\binom{m-r}{s-r}.
		\end{aligned}
		\end{equation}
		Observe that, using Lemma \ref{lem:vdm_convolution} for $x=k-i, y=m-i, z=s-i$, we get
		\[
		\begin{aligned}
		\sum_{r=1}^{k-1}(-1)^{r-i}\binom{k-i}{r-i}\binom{m-r}{s-r}=\sum_{r=i}^{s}(-1)^{r-i}\binom{k-i}{r-i}\binom{m-r}{s-r}&=\sum_{r=0}^{s-i}(-1)^r\binom{k-i}{r}\binom{m-i-r}{s-i-r}\\&=(-1)^{s-i}\binom{k-m+s-i-1}{s-i}.
		\end{aligned}
		\]
		Plugging this back into (\ref{eqn:propeq1}), we get that the $(i,j)$th entry of the product matrix is
		\begin{equation}
		\label{eqn:propeq2}
		\sum_{s=1}^{k-1}(-1)^{j-i}\binom{m-k+j-s-1}{j-s}\binom{k-m+s-i-1}{s-i}
		\end{equation}
		Notice that the sum in (\ref{eqn:propeq2}) only runs from $s=i$ to $s=j$, and therefore after the change of variable $s\mapsto s+i$, the expression reduces to
		\begin{equation}
		\label{eqn:propeq3}
		(-1)^{j-i}\sum_{s=0}^{j-i}\binom{m-k+j-s-i-1}{j-i-s}\binom{k-m+s-1}{s}.
		\end{equation}
		Note that $\binom{s-(m-k)-1}{s}=(-1)^s\binom{m-k}s$, so (\ref{eqn:propeq3}) is the sum
		\[
		(-1)^{j-i}\sum_{s=0}^{j-i}(-1)^s\binom{m-k}s\binom{m-k+j-i-1-s}{j-i-s},
		\]
		which, on invoking Lemma \ref{lem:vdm_convolution} for $x=m-k, y=m-k+j-i-1, z=j-i$, reduces to
		\[
		(-1)^{j-i}\cdot (-1)^{j-i}\cdot \binom{m-k-m+k-j+i+1+j-i-1}{j-i} = \binom{0}{j-i}.
		\]
		Clearly, this is $0$ when $j\neq i$ and $1$ when $j = i$.
	\end{proof}
	This completes the proof of Proposition \ref{prop:apndx-BDW'=I}.
\end{proof}


\bibliographystyle{unsrt}
\bibliography{Hfree.bbl}

\begin{thebibliography}{10}

\bibitem{erdos-komlos1969}
P.~Erd\H{o}s and J.~Koml\'{o}s.
\newblock On a problem of {M}oser.
\newblock {\em Combinatorial theory and its applications, {I} ({P}roc.
  {C}olloq., {B}alatonf\"{u}red, 1969)}, pages 365--367, 1970.

\bibitem{kleitman-1973}
D.~Kleitman.
\newblock Review of the article \cite{erdos-komlos1969}.

\bibitem{erdos-shelah1972}
P.~Erd\H{o}s and S.~Shelah.
\newblock On problems of {M}oser and {H}anson.
\newblock {\em Graph theory and applications ({P}roc. {C}onf., {W}estern
  {M}ichigan {U}niv., {K}alamazoo, {M}ich., 1972; dedicated to the memory of
  {J}. {W}. {T}. {Y}oungs)}, pages 75--79. Lecture Notes in Math., Vol. 303,
  1972.

\bibitem{fox-lee-sudakov2012}
Jacob Fox, Choongbum Lee, and Benny Sudakov.
\newblock Maximum union-free subfamilies.
\newblock {\em Israel J. Math.}, 191(2):959--971, 2012.

\bibitem{furedi-zoltan2012}
J\'{a}nos Bar\'{a}t, Zolt\'{a}n F\"{u}redi, Ida Kantor, Younjin Kim, and
  Bal\'{a}zs Patk\'{o}s.
\newblock Large {$B_d$}-free and union-free subfamilies.
\newblock {\em SIAM J. Discrete Math.}, 26(1):71--76, 2012.

\bibitem{erdos-bollobas1968}
Erd\H{o}s Paul.
\newblock Problems.
\newblock In {\em Theory of Graphs (Proc. Colloq., Tihany, 1966)}, pages
  361--362, 1968.

\bibitem{erdos-unsolvedprob1971}
P.~Erd\H{o}s.
\newblock Some unsolved problems in graph theory and combinatorial analysis.
\newblock In {\em Combinatorial {M}athematics and its {A}pplications ({P}roc.
  {C}onf., {O}xford, 1969)}, pages 97--109. Academic Press, London, 1971.

\bibitem{foucaud-krivelevich2015}
F.~Foucaud, M.~Krivelevich, and G.~Perarnau.
\newblock Large subgraphs without short cycles.
\newblock {\em SIAM J. Discrete Math.}, 29(1):65--78, 2015.

\bibitem{conlon-fox-sudakov2014}
David Conlon, Jacob Fox, and Benny Sudakov.
\newblock Large subgraphs without complete bipartite graphs.
\newblock {\em arXiv preprint arXiv:1401.6711}, 2014.

\bibitem{conlon-fox-sudakov2016}
David Conlon, Jacob Fox, and Benny Sudakov.
\newblock Short proofs of some extremal results {II}.
\newblock {\em J. Combin. Theory Ser. B}, 121:173--196, 2016.

\bibitem{erdos-rado-sunflower1960}
P.~Erd\H{o}s and R.~Rado.
\newblock Intersection theorems for systems of sets.
\newblock {\em J. London Math. Soc.}, 35:85--90, 1960.

\bibitem{erdos-hajnal-rado-partitionrelations1965}
P.~Erd\H{o}s, A.~Hajnal, and R.~Rado.
\newblock Partition relations for cardinal numbers.
\newblock {\em Acta Math. Acad. Sci. Hungar.}, 16:93--196, 1965.

\bibitem{spencer-turan1972}
Joel Spencer.
\newblock Tur\'{a}n's theorem for {$k$}-graphs.
\newblock {\em Discrete Math.}, 2:183--186, 1972.

\bibitem{dembowski-hughes-inversive1965}
P.~Dembowski and D.~R. Hughes.
\newblock On finite inversive planes.
\newblock {\em J. London Math. Soc.}, 40:171--182, 1965.

\bibitem{dembowski-book2012}
Peter Dembowski.
\newblock {\em Finite Geometries: Reprint of the 1968 edition}.
\newblock Springer Science \& Business Media, 2012.

\bibitem{biliotti-montinaro2012}
Mauro Biliotti and Alessandro Montinaro.
\newblock On the automorphism group of inversive planes of odd order.
\newblock {\em J. Algebra}, 354:49--70, 2012.

\bibitem{rinaldi-inversive2001}
Gloria Rinaldi.
\newblock Inversive planes, {M}inkowski planes and regular sets of points.
\newblock {\em European J. Combin.}, 22(3):357--363, 2001.

\end{thebibliography}

\end{document}